\date{}
\newcommand{\ep}{\varepsilon}
\newcommand{\re}{\mathbb{R}}
\newcommand{\ut}{\widehat{u}}
\newcommand{\ul}{u_{\lambda}}
\newcommand{\DD}{S}
\newcommand{\MM}{G}
\newcommand{\E}{\mathcal{E}}
\newcommand{\PS}{\mathcal{PS}}
\newcommand{\mmu}{\gamma}
\newcommand{\ekow}{E_{\mathrm{kow}}}
\newcommand{\etar}{E_{\mathrm{Tar}}}
\newtheorem{thm}{Theorem}[section]
\newtheorem{rmk}[thm]{Remark}
\newtheorem{prop}[thm]{Proposition}
\newtheorem{defn}[thm]{Definition}
\newtheorem{cor}[thm]{Corollary}
\newtheorem{ex}[thm]{Example}
\newtheorem{lemma}[thm]{Lemma}
\title{Generalized energy conservation for linear wave equations with time-dependent propagation speed}
\author{Marina Ghisi\vspace{1ex}\\ 
{\normalsize Università degli Studi di Pisa} \\
{\normalsize Dipartimento di Matematica}\\ 
{\normalsize PISA (Italy)}\\
{\normalsize e-mail: \texttt{marina.ghisi@unipi.it}}
\and
Massimo Gobbino\vspace{1ex}\\ 
{\normalsize Università degli Studi di Pisa} \\
{\normalsize Dipartimento di Matematica}\\ 
{\normalsize PISA (Italy)}\\  
{\normalsize e-mail: \texttt{massimo.gobbino@unipi.it}}
}
\begin{document}
\maketitle

\begin{abstract}

We consider a wave equation with a time-dependent propagation speed, whose potential oscillations are controlled through bounds on its first and second derivatives and by limiting the integral of the difference with a fixed constant. We investigate when the wave equation exhibits generalized energy conservation (GEC), meaning that the energy of all solutions remains bounded for all times by a multiple of the initial energy. When GEC is not satisfied, we provide upper bounds for the growth of the energy.

These upper bounds are derived by analyzing the growth of the Fourier components of the solution. Depending on the frequency and the time interval, different energy inequalities are employed to fully exploit our assumptions on the propagation speed.

Finally, we present counterexamples that demonstrate the optimality of our upper bound estimates.
\vspace{6ex}

\noindent{\bf Mathematics Subject Classification 2020 (MSC2020):} 
 35L20, 35L90, 35B40.

		
\vspace{6ex}

\noindent{\bf Key words:} 
wave equation, time-dependent propagation speed, energy conservation, energy estimates.

\end{abstract}

 
\section{Introduction}

Let us consider the wave equation
\begin{equation}
    u_{tt}(t,x)-c(t)\Delta u(t,x)=0,
    \label{eqn:wave}
\end{equation}
where the time variable $t$ belongs to the half-line $[t_0,+\infty)$, and the space variable $x$ belongs either to the whole space $\re^d$ or, more generally, to an open set $\Omega \subseteq \re^d$ (for some positive integer $d$). The coefficient $c: [t_0,+\infty) \to (0,+\infty)$ is a given function depending only on time, representing the square of the propagation speed.

We consider the usual class of weak solutions, namely functions 
\begin{equation}
    u\in C^0\left([t_0,+\infty),H^1(\re^d)\right)\cap C^1\left([t_0,+\infty),L^2(\re^d)\right)
\nonumber
\end{equation}
that satisfy (\ref{eqn:wave}) in the standard distributional sense. In the case of an open set $\Omega \subseteq \re^d$, the definition is appropriately modified to account for boundary conditions.

\paragraph{\textmd{\textit{Energy equality and generalized energy conservation}}}

In the special case where $c(t)\equiv c_0$ is a positive constant, it is well-known that the energy of the solution, defined in this case as
\begin{equation}
    E_u(t):=\|u_t(t,x)\|_{L^2(\re^d)}^2+
    c_0\cdot\|\nabla u(t,x)\|_{L^2(\re^d)}^2,
\nonumber
\end{equation}
is constant with respect to time, namely
\begin{equation}
    E_u(t)=E_u(t_0)
    \qquad
    \forall t\geq t_0.
    \label{EC}
\end{equation}

If $c$ satisfies the strict hyperbolicity condition $c(t)\geq\Lambda_1>0$ for every $t\geq t_0$, and in addition $c\in C^1([t_0,+\infty))$, then the energy of the solutions, defined in this case as
\begin{equation}
    E_u(t):=\|u_t(t,x)\|_{L^2(\re^d)}^2+
    c(t)\cdot\|\nabla u(t,x)\|_{L^2(\re^d)}^2,
    \label{defn:E-c(t)}
\end{equation}
satisfies the differential inequality
\begin{equation}
    |E_u'(t)|\leq\frac{|c'(t)|}{\Lambda_1}E_u(t)
    \qquad
    \forall t\geq t_0,
    \label{est:E'-hyp}
\end{equation}
from which one can deduce that
\begin{equation}
    E_u(t_0)\exp\left(-\frac{1}{\Lambda_1}\int_{t_0}^t|c'(\tau)|\,d\tau\right)\leq 
    E_u(t)\leq
    E_u(t_0)\exp\left(\frac{1}{\Lambda_1}\int_{t_0}^t|c'(\tau)|\,d\tau\right)
\nonumber
\end{equation}
for every $t\geq t_0$. In particular, if $c'\in L^1((t_0,+\infty))$, then all solution to (\ref{eqn:wave}) satisfy
\begin{equation}
    \frac{1}{C_0}E_u(t_0)\leq
    E_u(t)\leq
    C_0 E_u(t_0)
    \qquad
    \forall t\geq t_0,
    \label{GEC}
\end{equation}
with
\begin{equation}
    C_0:=\frac{1}{\Lambda_1}\int_{t_0}^{+\infty}|c'(\tau)|\,d\tau.
\nonumber
\end{equation}

In the literature, condition (\ref{GEC}) is referred to as \emph{generalized energy conservation} (GEC), as it serves as a substitute for the standard energy conservation law (\ref{EC}). A similar result holds if we replace the differentiability of $c$ and the integrability of $c'$ with the weaker requirement that the total variation of $c$ over $[t_0,+\infty)$ is finite.

When GEC fails, the focus shifts to estimating the potential growth from above and the potential decay from below of $E_u(t)$ as $t \to +\infty$.

Incidentally, we observe that one can define more general energies of the form
\begin{equation}
    E_u(t):=a(t)\cdot\|u_t(t,x)\|_{L^2(\re^d)}^2+
    b(t)\cdot\|\nabla u(t,x)\|_{L^2(\re^d)}^2.
    \label{defn:E-ab}
\end{equation}

If $a(t)$, $b(t)$ and $c(t)$ are bounded between two positive constants, then this energy is equivalent to the energy defined by (\ref{defn:E-c(t)}). Consequently, GEC holds with respect to the energy (\ref{defn:E-c(t)}) if and only if it holds with respect to the energy (\ref{defn:E-ab}).

\paragraph{\textmd{\textit{Previous literature}}}

When the total variation of $c(t)$ is unbounded, there are examples where GEC is violated, but also cases where GEC remains valid. The general principle is that GEC tends to fail when $c(t)$ oscillates ``too much'', as such oscillations can induce a resonance effect. Therefore, any set of conditions that sufficiently restricts the oscillations of $c(t)$ serves as a sufficient condition for GEC.

Over the last 20 years, various sets of conditions have been introduced in the literature to control these oscillations. The assumptions considered thus far generally fall into three categories.
\begin{itemize}
    \item Uniform hyperbolicity assumptions, namely asking that $c(t)$ lies in between two positive constants.

    \item Bounds on the first $m$ derivatives of $c$ (and sometimes also on the derivatives of any order).

    \item Stabilization assumptions, namely inequalities of the form
    \begin{equation}
        \int_{t_0}^t|c(\tau)-c_\infty|\,d\tau\leq S(t)
        \qquad\text{or}\qquad
        \int_{t}^{+\infty}|c(\tau)-c_\infty|\,d\tau\leq S(t)
\nonumber
    \end{equation}
    for a suitable constant $c_\infty$ and a suitable function $S(t)$.
    
\end{itemize}

A first result in this direction was obtained by M.~Reissig and J.~Smith~\cite{2005-Hokkaido-ReiSmi}. They considered a coefficient $c\in C^2([t_0,+\infty))$ for some $t_0>0$, and proved that GEC holds true provided that $c$ satisfies the uniform hyperbolicity assumption, and bounds of the form
\begin{equation}
    |c'(t)|\leq\frac{C_1}{t}
    \qquad\text{and}\qquad
    |c''(t)|\leq\frac{C_2}{t^2}
    \label{hp:yamazaki}
\end{equation}
for every $t\geq t_0$. On the opposite side, they showed that this result is optimal in the sense that there exists a coefficient $c\in C^\infty([t_0,+\infty))$ which, in addition to satisfying uniform hyperbolicity, also satisfies bounds on derivatives of any order of the form
\begin{equation}
    |c^{(k)}(t)|\leq\frac{C_{k,\beta}}{t^{\beta k}}
    \qquad
    \forall t\geq t_0,
    \quad
    \forall k\geq 1,
    \quad
    \forall\beta<1,
\nonumber
\end{equation}
and for which GEC fails.

This example motivated some authors to introduce the stabilization conditions, in the hope that they might compensate weaker assumptions on the derivatives. F.~Hirosawa~\cite{2007-MathAnn-Hirosawa} assumed that $c$ satisfies the uniform hyperbolicity, along with a bound for the derivatives of the form
\begin{equation}
    |c^{(k)}(t)|\leq\frac{C_k}{t^{\beta k}}
    \qquad
    \forall t\geq t_0,
    \quad
    \forall k\in\{1,\ldots,m\}
    \label{hp:der-hir}
\end{equation}
for some integer $m\geq 2$ and real number $\beta$, and a stabilization condition of the form
\begin{equation}
    \int_{t_0}^t|c(\tau)-c_\infty|\,d\tau\leq C_0\, t^\alpha
    \qquad
    \forall t\geq t_0
    \label{hp:stab-hir}
\end{equation}
for some real numbers $c_\infty>0$ and $\alpha\in[0,1)$ (note that the boundedness of $c(t)$ implies that the same condition is automatically true for every $\alpha\geq 1$ and every $c_\infty\in\re$).

The result, as reported in some subsequent papers (see~\cite{2015-JMAA-EbeFitHir,2021-JMAA-Hirosawa}), is that GEC holds if
\begin{equation}
    \beta\geq\beta_m:=\alpha+\frac{1-\alpha}{m},
    \label{defn:beta-m}
\end{equation}
and GEC fails if $\beta<\alpha$, while the case $\alpha\leq \beta<\beta_m$ remained open. In the same years, F.~Colombini~\cite{2006-JDE-Colombini} proved that GEC fails when $\alpha\in(0,1)$ and $\beta\leq(1-\alpha)/m$.

The validity of GEC in the range $\beta\geq\beta_m$ is a consequence of a more general estimate stated in~\cite{2007-MathAnn-Hirosawa}, namely that all solutions to (\ref{eqn:wave}) satisfy
\begin{equation}
    E_u(t)\leq E_u(t_0)\cdot\Gamma_1\exp\left(\Gamma_2\, t^{\beta_m-\beta}\right)
    \qquad
    \forall t\geq t_0
    \label{est:hirosawa}
\end{equation}
and the corresponding estimate from below, for suitable constants $\Gamma_1$ and $\Gamma_2$ that depend only on the bounds that appear in the assumptions on $c(t)$. We do not agree that (\ref{est:hirosawa}) and GEC hold true in the limit case where $\alpha=0$ and $\beta=1/m$, as well as we do not agree that GEC holds true without stabilization properties when $\beta=1/m$ and (\ref{eqn:wave}) is considered in a bounded open set (see~\cite[Theorem~1.6]{2021-JMAA-Hirosawa}). We will come back on these issues in the sequel (see Corollary~\ref{cor:alpha=0} and Example~\ref{ex:no-way}). 

Later on, M.R.~Ebert, L.~Fitriana and F.~Hirosawa in~\cite{2015-JMAA-EbeFitHir} assumed again the uniform hyperbolicity, and a bound on derivatives of the form (\ref{hp:der-hir}), but they replaced (\ref{hp:stab-hir}) with the condition
\begin{equation}
    \int_{t}^{+\infty}|c(\tau)-c_\infty|\,d\tau\leq C_0\,t^\alpha
    \qquad
    \forall t\geq t_0
    \label{hp:stab-ebert}
\end{equation}
for some real numbers $c_\infty>0$ and $\alpha<0$. Again they proved that GEC holds if $\beta\geq\beta_m$ (with $\beta_m$ given by (\ref{defn:beta-m})), and GEC fails if $\beta<\alpha$, while the case $\alpha\leq \beta<\beta_m$ remained open also in this regime.

All these results have been extended by considering more general decay/growth rates in (\ref{hp:der-hir}), (\ref{hp:stab-hir}), (\ref{hp:stab-ebert}), and by replacing the constant $c_\infty$ in the stabilization assumptions by a function $c_\infty(t)$ for which GEC is known to be true, for example a nonincreasing positive function. We refer to~\cite{2009-JMAA-HirWir,2012-Ferrara-BoiMan,2015-JMAA-EbeFitHir} for the details. Finally, it should be mentioned that in~\cite{2021-JMAA-Hirosawa} analogous results were proved for the semi-discrete version of (\ref{eqn:wave}) in which the Laplacian is replaced by finite differences. 

We point out that in many of the counterexamples exhibited so far the failure of GEC is in some sense a ``weak failure''. Indeed, what is actually proved in some papers is the existence of a sequence of times $t_k\to +\infty$, a sequence of coefficients $\{c_k\}$, and a sequence $\{u_k\}$ of corresponding solutions to (\ref{eqn:wave}) such that
\begin{equation}
    E_{u_k}(t_0)\leq 1
    \qquad\text{and}\qquad
    E_{u_k}(t_k)\to +\infty.
\nonumber
\end{equation}

In other words, different times require different solutions corresponding to different propagation speeds. In addition, many times there is a gap between the divergence rate in the counterexamples and the estimate from above in the positive results.
 
A ``strong failure'' would require a single solution $u$, corresponding to a single propagation speed, such that
\begin{equation}
    \limsup_{t\to +\infty}E_u(t)=+\infty
\nonumber
\end{equation}
with the same rate provided by the positive results.

\paragraph{\textmd{\textit{Our contribution}}}

The aim of this paper is to close the gap between the positive and negative results, and to provide counterexamples in the strong sense when GEC fails. To keep the paper at a reasonable length, we limit ourselves to the case $m=2$, and to stabilization properties of the form (\ref{hp:stab-ebert}), but we are confident that more general cases can be addressed using similar techniques. More precisely, here we focus on coefficients $c:[t_0,+\infty)\to(0,+\infty)$ of class $C^2$ that satisfy the following assumptions.
\begin{enumerate}
\renewcommand{\labelenumi}{(\arabic{enumi})}
    \item (Uniform hyperbolicity). There exists two real numbers $\Lambda_1$ and $\Lambda_2$ such that
    \begin{equation}
        0<\Lambda_1\leq c(t)\leq\Lambda_2
        \qquad
        \forall t\geq t_0.
        \label{hp:hyp}
    \end{equation}

    \item (Stabilization at infinity). There exist a real number $c_\infty$ and a non-increasing continuous function $\DD:[t_0,+\infty)\to\re$ such that $\DD(t)\to 0$ as $t\to +\infty$ and
    \begin{equation}
        \int_t^{+\infty}|c(\tau)-c_\infty|\,d\tau\leq\DD(t)
        \qquad
        \forall t\geq t_0.
        \label{hp:stab}
    \end{equation}

    We observe that $c_\infty$ is necessarily unique and belongs to the interval $[\Lambda_1,\Lambda_2]$.

    \item (Control on first two derivatives). There exists a monotone (either non-increasing or non-decreasing) function $\mmu:[t_0,+\infty)\to\re$ such that
    \begin{equation}
        |c'(t)|\leq\mmu(t)
        \quad\text{and}\quad
        |c''(t)|\leq\mmu(t)^2
        \qquad
        \forall t\geq t_0.
        \label{hp:der}
    \end{equation}
\end{enumerate}

On the positive side, in Theorem~\ref{thm:est-above} we prove an estimate from above (and also from below, see Remark~\ref{rmk:below}) for the energy of solutions to the wave equations (\ref{eqn:wave}) with a coefficient $c(t)$ that satisfies our assumptions. On the negative side, in Theorem~\ref{thm:optimality} we prove that the estimate of Theorem~\ref{thm:est-above} is sharp under rather general assumptions on $\mmu(t)$ and $S(t)$. 

In the special case where $\mmu(t)\sim t^{-\beta}$ for some $\beta\in\re$, and $S(t)\sim t^\alpha$ for some real number $\alpha<0$, the estimate of Theorem~\ref{thm:est-above} reduces to (\ref{est:hirosawa}) with $m=2$. As a corollary, we again obtain that GEC holds when $\beta\geq\beta_2$, where $\beta_2$ is defined in (\ref{defn:beta-m}) with $m=2$. In the same scenario, Theorem~\ref{thm:optimality} proves that GEC fails when $\beta<\beta_2$, thereby closing the gap left in~\cite{2015-JMAA-EbeFitHir}, with the open region now addressed by the counterexamples.

When $\alpha=0$, we again recover (\ref{est:hirosawa}) with $m=2$, but only if $\beta\neq 1/2$. However, when $\alpha=0$ and $\beta=1/2$, a logarithmic correction appears in the argument of the exponential, preventing us from concluding that GEC holds in this case. In fact, Theorem~\ref{thm:optimality} shows that GEC fails whenever $\alpha=0$ and $\beta\leq 1/2$, demonstrating that the logarithmic correction is crucial.

Whenever we prove that GEC fails, it is always a strong failure, namely there exist a propagation speed that satisfies the assumptions, and a corresponding solution to the wave equation (\ref{eqn:wave}) whose energy grows as much as permitted by the bounds of Theorem~\ref{thm:est-above} (up to constants).

Our results are valid also for abstract evolution equations of the form
\begin{equation}
    u''(t)+c(t)Au(t)=0,
    \label{eqn:main}
\end{equation}
where $A$ is a non-negative self-adjoint operator with dense domain $D(A)$ in a real Hilbert space $\mathcal{H}$. Here we stress that, under our assumptions, the failure of GEC is always driven by large frequencies, and therefore the result holds true also in bounded domains.

\paragraph{\textmd{\textit{Overview of the technique}}}

Let us consider the family of second order ordinary differential equations
\begin{equation}
    \ul''(t)+\lambda^2 c(t)\ul(t)=0,
    \label{eqn:ode}
\end{equation}
depending on the real parameter $\lambda>0$. Thanks to Fourier analysis, it is well-known that proving energy estimates for solution to (\ref{eqn:wave}) is equivalent to proving $\lambda$-independent estimates for solutions to (\ref{eqn:ode}). 

In the proof of Theorem~\ref{thm:est-above} we follow this path, and we obtain the required $\lambda$-independent estimates by demonstrating that the appropriate energies satisfy specific differential inequalities. This method differs from previous works in the literature, which relied on phase space analysis and diagonalization techniques.

The construction of the counterexamples adapts the classical procedure introduced in the seminal paper~\cite{DGCS}, which involves finding a coefficient $c(t)$ such that (\ref{eqn:ode}) admits a family of solutions that grow exponentially with respect to both time and $\lambda$. The process typically consists of two steps.
\begin{itemize}
    \item In the first step, a special solution is used to construct a coefficient $c(t)$ that, within a given time-interval $[a,b]$, exponentially amplifies all components of $u$ corresponding to frequencies near a specific $\lambda$. In other words, these components are of order~1 at time $t=a$, and grow to the maximum order allowed by the positive result by time $t=b$. When this occurs, we say that $c(t)$ activates the frequency $\lambda$. In our case, this first step is accomplished in Proposition~\ref{prop:main}.

    \item In the second step, an iterative process is designed to produce a coefficient that activates the frequency $\lambda_k$ over a sequence of intervals $[a_k,b_k]$. By the end, at each time $t=b_k$, the solution’s energy is as large as desired, though the components responsible for this growth vary with each $k$. This second step is carried out in section~\ref{sec:iteration}.

\end{itemize}

It is noteworthy that there is an interplay between the positive result and the counterexamples: the parameter choices in the proof of Theorem~\ref{thm:est-above}, and specifically which energy is used in which time interval, guide the parameter selection in the construction of the counterexamples.

\paragraph{\textmd{\textit{A related problem}}}

The problem of estimating the growth at infinity of the energy parallels the problem of local existence of solutions. To illustrate this parallel, let us now consider equation (\ref{eqn:wave}) with a coefficient $c(t)$ defined in some interval $(0,t_0)$. On $c(t)$ we can impose uniform hyperbolicity conditions such as (\ref{hp:hyp}), just for $t\in(0,t_0)$, and a control on the derivatives of the form (\ref{hp:der}) within the same interval. Regarding stabilization properties, these correspond to assumptions such as
\begin{equation}
    \int_0^t|c(\tau)-c_\infty|\,d\tau\leq C_0\, t^\alpha
    \qquad
    \forall t\in(0,t_0),
    \label{hp:stab-0}
\end{equation}
which now make sense if $\alpha>1$ (otherwise they are a consequence of the boundedness of $c$). We observe that (\ref{hp:stab-0}) can be interpreted as some sort of ``integral Hölder continuity in~0'', placing us close to the classical scenario where one controls both the modulus of continuity and the potential blow-up of derivatives at the origin (see for example~\cite{2003-BSM-ColDSaRei,2007-DIE-DSaKinRei,gg:CDSR-optimal,gg:OptDerLoss} and the references quoted therein).

In this context, the equivalent of GEC is well-posedness in Sobolev spaces, because both are equivalent to $\lambda$-independent estimates for solutions to (\ref{eqn:ode}). For example, the fact that GEC holds when $c(t)$ satisfies (\ref{hp:yamazaki}) at infinity parallels the classical result that (\ref{eqn:wave}) is well-posed in Sobolev spaces when $c(t)$ satisfies (\ref{hp:yamazaki}) in a right neighborhood of the origin (see \cite{1990-CPDE-Yamazaki}). Similarly, an exponential growth of the energy corresponds to well-posedness in suitable Gevrey spaces.

Therefore, it is hardly surprising that the statements and proofs related to GEC are analogous to those for well-posedness.

\paragraph{\textmd{\textit{Structure of the paper}}}

This paper is organized as follows. In section~\ref{sec:statements} we state our main results and we compare them with previous literature in the case of powers. In section~\ref{sec:positive} we prove our estimates from above. In section~\ref{sec:negative} we construct the counterexamples that show their optimality.


\setcounter{equation}{0}
\section{Statements}\label{sec:statements}

\subsection{Notation and definitions}

In this paper we consider the abstract evolution equation (\ref{eqn:main}), where $A$ is a linear operator in a real Hilbert space $\mathcal{H}$. We always assume that $A$ is unitary equivalent to a non-negative multiplication operator in some $L^{2}$ space. More precisely, we assume that there exist a measure space $(\mathcal{M},\mu)$, a measurable function $\lambda:\mathcal{M}\to[0,+\infty)$, and a linear isometry $\mathscr{F}:H\to L^{2}(\mathcal{M},\mu)$ with the property that for every $u\in H$ it turns out that
\begin{equation}
u\in D(A)
\quad\Longleftrightarrow\quad
\lambda(\xi)^{2}[\mathscr{F}u](\xi)\in L^{2}(\mathcal{M},\mu),
\nonumber
\end{equation}
and for every $u\in D(A)$ it turns out that
\begin{equation}
\left[\mathscr{F}(Au)\right](\xi)=
\lambda(\xi)^{2}[\mathscr{F}u](\xi)
\qquad
\forall\xi\in\mathcal{M}. 
\nonumber
\end{equation}

Roughly speaking, $\mathscr{F}$ is a sort of generalized Fourier transform that allows us to identify each element $u\in\mathcal{H}$ with a function $\ut\in L^{2}(\mathcal{M},\mu)$, and under this identification the operator $A$ becomes the multiplication operator by $\lambda(\xi)^{2}$ in $L^{2}(\mathcal{M},\mu)$. Under this identification, the spectrum of $A$ can be defined as follows.

\begin{defn}[Spectrum of a multiplication operator]\label{defn:spectrum}
\begin{em}
    Let $\mathcal{H}$ be a Hilbert space, and let $A$ be a multiplication operator on $\mathcal{H}$, defined as above. The \emph{spectrum} of $A$ is the set of all real numbers $\lambda^2$ such that
    \begin{equation}
        \forall r>0
        \qquad
        \mu\left(\{\xi\in\mathcal{M}:\lambda(\xi)^2\in[\lambda^2-r,\lambda^2+r]\}\right)>0.
\nonumber
    \end{equation}
    
\end{em}
\end{defn}

Let us introduce the classes of coefficients $c(t)$ that we consider in (\ref{eqn:main}).

\begin{defn}[Admissible propagation speeds]\label{defn:PS}
\begin{em}
    Let $t_0$ and $0<\Lambda_1\leq\Lambda_2$ be three real numbers, let $\DD:[t_0,+\infty)\to\re$ be a continuous non-increasing function such that $\DD(t)\to 0$ as $t\to +\infty$, and let $\mmu:[t_0,+\infty)\to\re$ be a continuous monotone function. 
    
    We call $\PS(t_0,\Lambda_1,\Lambda_2,\DD,\mmu)$ the set of all functions $c:[t_0,+\infty)\to\re$ of class $C^2$ that satisfy (\ref{hp:hyp}), (\ref{hp:stab}) and (\ref{hp:der}). 
    
    We call $S$ the \emph{stabilization rate} of $c$, and $\gamma$ the \emph{growth/decay} (depending on the verse of monotonicity) \emph{rate} of its derivatives. For future use, we introduce the non-decreasing and non-negative continuous functions
\begin{equation}
    \MM(t):=\int_{t_0}^t\mmu(\tau)^2\,d\tau
    \qquad
    \forall t\geq t_0,
    \label{defn:G}
\end{equation}
and
\begin{equation}
    M(t):=\max\{\MM(\tau)\DD(\tau):\tau\in[t_0,t]\}
    \qquad
    \forall t\geq t_0.
    \label{defn:Max}
\end{equation}

\end{em}
\end{defn}

For all solutions $u$ to (\ref{eqn:main}), we fix once for all, among all equivalent possibilities, the energy
\begin{equation}
    \E_u(t):=\frac{\|u'(t)\|_H^2}{c_\infty^{1/2}}+
    c_\infty^{1/2}\cdot\|A^{1/2}u(t)\|_H^2
    \qquad
    \forall t\geq t_0,
    \label{defn:energy}
\end{equation}
where $c_\infty$ is the constant that appears in the stabilization assumption (\ref{hp:stab}).

We are now ready to define formally what it means for GEC to hold in a class of coefficients.

\begin{defn}[Generalized energy conservation]\label{defn:GEC}
\begin{em}

    Let $t_0$, $\Lambda_1$, $\Lambda_2$, $\DD$ and $\mmu$ be as in Definition~\ref{defn:PS}.  
    
    We say that the \emph{generalized energy conservation} holds true in $\PS(t_0,\Lambda_1,\Lambda_2,\DD,\mmu)$ if there exists a positive real number $H_0$ with the following property. For every Hilbert space $\mathcal{H}$, every multiplication operator $A$ on $\mathcal{H}$, every coefficient $c\in\PS(t_0,\Lambda_1,\Lambda_2,\DD,\mmu)$, and every solution $u$ of (\ref{eqn:main}), it turns out that
    \begin{equation}
        \frac{1}{H_0}\E_u(t_0)\leq \E_u(t)\leq H_0 \E_u(t_0)
        \qquad
        \forall t\geq t_0.
\nonumber
    \end{equation}
\end{em}
\end{defn}

\subsection{Positive result -- Estimate of energy growth}

The first main result of this paper is the following energy estimate for solutions to (\ref{eqn:main}) (cfr.~\cite[Theorem~2.4]{2015-JMAA-EbeFitHir}).

\begin{thm}[Estimates from above]\label{thm:est-above}

Let $t_0$, $\Lambda_1$, $\Lambda_2$, $\DD$ and $\mmu$ be as in Definition~\ref{defn:PS}. Let $\MM$ and $M$ be the functions defined in (\ref{defn:G}) and (\ref{defn:Max}), respectively.

Then for every Hilbert space $\mathcal{H}$, every multiplication operator $A$ on $\mathcal{H}$, every coefficient $c\in\PS(t_0,\Lambda_1,\Lambda_2,\DD,\mmu)$, and every solution $u$ to (\ref{eqn:main}), the energy $\E_u(t)$ defined by (\ref{defn:energy}) satisfies the following estimates.

\begin{enumerate}
\renewcommand{\labelenumi}{(\arabic{enumi})}
    \item If $\mmu$ is non-decreasing, then 
    \begin{equation}
        \E_u(t)\leq\E_u(t_0)\cdot H_1\exp\left(H_2 M(t)^{1/2}\right)
        \qquad
        \forall t\geq t_0,
        \label{th:mu-up}
    \end{equation}
    where
    \begin{equation}
        H_1:=\frac{3\Lambda_2}{\Lambda_1},
        \qquad\qquad
        H_2:=\max\left\{\frac{2(\Lambda_2-\Lambda_1)^{1/2}}{\Lambda_1^2},
        \frac{(2\Lambda_1+3)^{1/2}}{\Lambda_1^{3/2}}\right\}.
        \label{defn:C12}
    \end{equation}

    \item If $\mmu$ is non-increasing, then 
    \begin{equation}
        \E_u(t)\leq\E_u(t_0)\cdot H_3\exp\left(H_4 M(t)^{1/2}\right)
        \qquad
        \forall t\geq t_0,
        \label{th:mu-down}
    \end{equation}
    where
    \begin{equation}
        H_3:=\max\left\{\frac{3\Lambda_2}{\Lambda_1},\exp\left(\frac{\mmu(t_0)\DD(t_0)}{2\Lambda_1^2}\right)\right\},
        \qquad\qquad
        H_4:=\frac{(2\Lambda_1+3)^{1/2}}{\Lambda_1^{3/2}}.
        \label{defn:C34}
    \end{equation}

\end{enumerate}

\end{thm}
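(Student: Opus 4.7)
Via the spectral theorem for $A$, the theorem reduces to a $\lambda$-uniform version of the bound for every scalar solution of the ODE family $u_\lambda''+\lambda^2 c(t)u_\lambda=0$: under this reduction $\mathcal{E}_u(t)$ corresponds (fibrewise in the spectral measure) to $e_\lambda(t):=c_\infty^{-1/2}|u_\lambda'(t)|^2+c_\infty^{1/2}\lambda^2|u_\lambda(t)|^2$. The plan, as suggested by the two reserved macros $\ekow$ and $\etar$ and by the overview, is to work with two complementary model energies: a Kowalevski-type energy $\ekow=e_\lambda$ (whose potential uses the \emph{constant} $c_\infty$), useful when $c(t)$ is close to $c_\infty$; and a Tarama-type modified hyperbolic energy $\etar$ built on the true coefficient (or on a mollified surrogate $c_\varepsilon(t)$) with a lower-order corrector designed to absorb the $c'$-term.

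A direct differentiation along the ODE gives $|\ekow'(t)|\leq 2\lambda c_\infty^{-1/2}|c(t)-c_\infty|\,\ekow(t)$, whence Grönwall and (\ref{hp:stab}) yield $\ekow(t)\leq\ekow(s)\exp(2\lambda S(s)/c_\infty^{1/2})$ for $t_0\leq s\leq t$; this estimate is $\lambda$-uniform precisely when $\lambda S(s)=O(1)$. The naive hyperbolic energy $|u'|^2+\lambda^2 c|u|^2$, by contrast, only produces $|E'|/E\leq \gamma(t)/\Lambda_1$, whose integral is $\int\gamma$, one order weaker than the target $\sqrt{G(t)}$. The Tarama-type energy $\etar$ is constructed so that, after cancellation of the leading $c'$-contribution, the remainder is controlled by the $C^2$ hypothesis $|c''|\leq\gamma^2$ (combined with Cauchy--Schwarz and a judicious choice of the smoothing parameter $\varepsilon$), producing a differential inequality whose integrand is governed by $\gamma^2$ rather than $\gamma$; its time integral is then bounded by $\sqrt{G(t)}$-type quantities via AM--GM.

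For each pair $(\lambda,t)$ the proof concludes by splitting $[t_0,t]$ at a threshold $t^*=t^*(\lambda,t)$ (defined, roughly, by $\lambda S(t^*)\simeq 1$) and applying $\etar$ on $[t_0,t^*]$ and $\ekow$ on $[t^*,t]$. Summing the two exponents and using the norm-equivalence $\ekow\sim\etar\sim e_\lambda$ (with constants depending only on $\Lambda_1$ and $\Lambda_2$) leads to an overall exponential bound governed by $\sqrt{G(t^*)S(t^*)}\leq\sqrt{M(t)}$; the explicit constants $H_1,\ldots,H_4$ in (\ref{defn:C12})--(\ref{defn:C34}) emerge from tracking these equivalences. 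The main obstacle is the precise design of $\etar$ and the associated bookkeeping that allows $|c''|\leq\gamma^2$ to enter in the right way to produce $\gamma^2$ in the integrand (rather than $\gamma$), and the sharp balancing of the two exponents at $t^*$ so that $\max_{\tau\leq t}G(\tau)S(\tau)$, rather than a looser quantity, governs the growth. A secondary difficulty, explaining the dichotomy between cases (1) and (2), is that when $\gamma$ is non-increasing the coefficient can oscillate heavily near $t_0$ (where $\gamma$ is largest), so that the above switching is unavailable on an initial window $[t_0,t_1]$; this forces a preliminary Kowalevski-style estimate on that window, which is exactly the source of the extra factor $\exp(\gamma(t_0)S(t_0)/(2\Lambda_1^2))$ appearing in the definition of $H_3$.
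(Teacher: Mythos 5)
Your overall strategy matches the paper's: reduce to the scalar ODE family, work with the Kowaleskian energy $\ekow$ (built on the constant $c_\infty$) and a Tarama energy $\etar$ (built on $c(t)$ with the corrector $\tfrac{1}{2}c'(t)c(t)^{-3/2}\ul'\ul$), derive the two complementary Gr\"onwall-type exponents $\lambda\int|c-c_\infty|$ and $\MM/\lambda$, split the time interval, and combine via the norm-equivalence $\ekow\sim\etar$. However, your proposed switching criterion is concretely wrong. You define $t^*$ by $\lambda\,\DD(t^*)\simeq 1$; with that choice the Kowaleskian exponent on $[t^*,t]$ becomes $O(1)$, but the Tarama exponent on $[t_0,t^*]$ is of order $\MM(t^*)/\lambda\simeq\MM(t^*)\DD(t^*)$, i.e.\ of order $M(t)$, not $M(t)^{1/2}$. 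To get the geometric mean one must balance the two exponents \emph{against each other}: the paper chooses (for given $\lambda$ and $t$) a time $t_2$ such that $\MM(t_2)/\lambda\simeq\lambda\int_{t_2}^t|c-c_\infty|$, i.e.\ $\lambda\simeq(\MM(t_2)/\DD(t_2))^{1/2}$; only then does the combined exponent become $\simeq[\MM(t_2)\DD(t_2)]^{1/2}\leq M(t)^{1/2}$. Your stated criterion $\lambda\DD(t^*)\simeq1$ is independent of $t$, does not balance the two contributions, and would yield $\exp(M(t))$ rather than $\exp(M(t)^{1/2})$.

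Two further gaps. First, you do not address the constraint under which $\etar$ is usable at all, namely $\lambda\geq\tfrac{1}{2}|c'(t)|c(t)^{-3/2}$: this is what forces the split into small and large frequencies, and, in case (1) (non-decreasing $\mmu$), forces an additional cutoff time $t_1=\max\{\tau:2\Lambda_1^{3/2}\lambda\geq\mmu(\tau)\}$ beyond which only the Kowaleskian estimate is available; this interacts with the balancing choice of $t_2$ and produces an extra sub-case. Second, your explanation of the $H_1$ versus $H_3$ dichotomy has the monotonicity reversed. For non-increasing $\mmu$ (case (2)), once $\lambda$ exceeds $\mmu(t_0)/(2\Lambda_1^{3/2})$ the Tarama condition holds on the whole half-line, and the only problem is small $\lambda$, handled by a pure Kowaleskian estimate whose exponent is bounded by $\mmu(t_0)\DD(t_0)/(2\Lambda_1^2)$ --- that is the source of the extra factor in $H_3$. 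The genuinely delicate case, where the Tarama window can end strictly before $t$, is the \emph{non-decreasing} one (case (1)), the opposite of what you state.
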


\begin{rmk}[Estimates from below]\label{rmk:below}
\begin{em}

The corresponding estimates from below hold true as well. For example, when $\mu$ is non-increasing, it turns out that
\begin{equation}
    \E_u(t)\geq\E_u(t_0)\cdot (H_1)^{-1}\exp\left(-H_2 M(t)^{1/2}\right)
    \qquad
    \forall t\geq t_0,
\nonumber
\end{equation}
and analogously in the case of (\ref{th:mu-down}).

\end{em}
\end{rmk}

We conclude by observing that GEC holds whenever $M(t)$ is bounded.

\begin{cor}[Sufficient condition for the generalized energy conservation]

    Under the same assumptions of Theorem~\ref{thm:est-above}, the generalized energy conservation holds true in $\PS(t_0,\Lambda_1,\Lambda_2,\DD,\mmu)$ if
    \begin{equation}
        \sup\left\{\MM(\tau)\DD(\tau):\tau\geq t_0\right\}<+\infty.
\nonumber
    \end{equation}
\end{cor}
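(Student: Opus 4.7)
The plan is to derive the corollary directly from Theorem~\ref{thm:est-above} and its companion lower bound stated in Remark~\ref{rmk:below}. The key observation is that the quantity $M(t)$ defined by (\ref{defn:Max}) is by construction non-decreasing and bounded above by the supremum $\sup\{\MM(\tau)\DD(\tau):\tau\geq t_0\}$. Under the assumption of the corollary, this supremum is finite, say equal to some constant $K$, and therefore $M(t)\leq K$ for all $t\geq t_0$.

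First I would split into the two cases according to the monotonicity of $\mmu$. In case $\mmu$ is non-decreasing, I would apply the estimate (\ref{th:mu-up}) together with the bound $M(t)\leq K$ to obtain
\begin{equation}
    \E_u(t)\leq \E_u(t_0)\cdot H_1\exp\left(H_2 K^{1/2}\right)
    \qquad
    \forall t\geq t_0,
\nonumber
\end{equation}
and symmetrically in case $\mmu$ is non-increasing I would invoke (\ref{th:mu-down}) to get an analogous bound with $H_3$ and $H_4$ in place of $H_1$ and $H_2$. The corresponding lower bounds come from Remark~\ref{rmk:below} in exactly the same way.

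At this point the constant
\begin{equation}
    H_0:=\max\left\{H_1\exp\left(H_2 K^{1/2}\right),\;H_3\exp\left(H_4 K^{1/2}\right)\right\}
\nonumber
\end{equation}
depends only on $t_0$, $\Lambda_1$, $\Lambda_2$, $\DD$, and $\mmu$ (through $K$ and the constants in (\ref{defn:C12}) and (\ref{defn:C34})), and is independent of the Hilbert space $\mathcal{H}$, of the multiplication operator $A$, of the specific coefficient $c\in\PS(t_0,\Lambda_1,\Lambda_2,\DD,\mmu)$, and of the solution $u$. This matches exactly the uniformity required in Definition~\ref{defn:GEC}, so the generalized energy conservation follows.

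There is no real obstacle here: the statement is a direct corollary, and the only subtlety to check is that the constant $H_0$ produced this way depends on the class $\PS(t_0,\Lambda_1,\Lambda_2,\DD,\mmu)$ but not on the particular choice of $c$, $A$, $\mathcal{H}$, or $u$, which is manifest from the explicit expressions (\ref{defn:C12}) and (\ref{defn:C34}) for $H_1,\ldots,H_4$.
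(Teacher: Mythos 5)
Your proof is correct and is essentially the argument the paper intends (the paper states the corollary without a separate proof, treating it as an immediate consequence of Theorem~\ref{thm:est-above} and Remark~\ref{rmk:below}): boundedness of $\MM(\tau)\DD(\tau)$ gives a uniform bound on $M(t)$, which bounds the exponential factor in (\ref{th:mu-up})/(\ref{th:mu-down}), and the explicit dependence of $H_1,\ldots,H_4$ only on the class data ensures the uniformity demanded by Definition~\ref{defn:GEC}.
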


\subsection{Negative result -- Counterexamples}

The second main result of this paper addresses the extent to which the estimates in Theorem~\ref{thm:est-above} are optimal. To start, it’s important to note that these estimates cannot be universally optimal, as demonstrated by the following example.

\begin{ex}[Fast growth of derivatives]\label{ex:non-opt}
\begin{em}
    Let us consider the case where 
    $$t_0:=0, 
    \qquad\qquad
    \mmu(t):=2\sqrt{t}\exp(t^2),
    \qquad\qquad
    S(t):=\frac{1}{\sqrt{t+1}}.$$

    An easy calculation shows that
    \begin{equation}
        G(t)=\int_0^t 4\tau\exp(2\tau^2)\,d\tau=
        \exp(2t^2)-1
        \qquad
        \forall t\geq 0,
    \nonumber
    \end{equation}
    and hence
    \begin{equation}
        M(t)=
        \max\{G(\tau)S(\tau):\tau\in[0,t]\}\sim
        \frac{\exp(2t^2)}{\sqrt{t}}
        \qquad
        \text{as }t\to +\infty.
    \nonumber
    \end{equation}
    
    Consequently, in this case Theorem~\ref{thm:est-above} gives an upper bound of the form
    \begin{equation}
        \E_u(t)\lesssim\E_u(0)\cdot H_1
        \exp\left(H_2 t^{-1/4}\exp(t^2)\right)
        \qquad
        \text{as }t\to +\infty.
        \label{est:ex-tar}
    \end{equation}

    On the other hand, if we consider the classical hyperbolic energy
    \begin{equation}
        \widehat{\E}_u(t):=\|u'(t)\|_H^2+
        c(t)\cdot\|A^{1/2}u(t)\|_H^2,
    \nonumber
    \end{equation}
    then one can show that it satisfies an estimate analogous to (\ref{est:E'-hyp}), from which one can deduce that
    \begin{equation}
        \widehat{\E}_u(t)\leq\widehat{\E}_u(0)\cdot
        \exp\left(\frac{1}{\Lambda_1}\int_0^t 2\sqrt{\tau}\exp(\tau^2)\,d\tau\right).
        \label{est:ex-hyp}
    \end{equation}

    Since the two energies are equivalent, and
    \begin{equation}
        \int_0^t 2\sqrt{\tau}\exp(\tau^2)\,d\tau\sim
        \frac{\exp(t^2)}{\sqrt{t}}
        \qquad
        \text{as }t\to +\infty,
    \nonumber
    \end{equation}
    it is clear that (\ref{est:ex-hyp}) is much better than (\ref{est:ex-tar}), at least for $t$ large enough, and consequently (\ref{est:ex-tar}) can not be optimal in this case.
\end{em}    
\end{ex}

A careful analysis of Example~\ref{ex:non-opt} above reveals that its effectiveness relies on the more than exponential growth of $\mmu(t)$ at infinity, leading to its integral growing more slowly than $\mmu(t)$. This observation suggests that, to prove the optimality of Theorem~\ref{thm:est-above}, we should focus on functions $\mmu$ that grow at most exponentially as $t\to +\infty$. This is precisely the approach taken in the next result.

\begin{thm}[Optimality of the estimates from above]\label{thm:optimality}
    Let $t_0$, $\Lambda_1$, $\Lambda_2$, $\DD$ and $\mmu$ be as in Definition~\ref{defn:PS}. Let $\MM$ and $M$ be the functions defined in (\ref{defn:G}) and (\ref{defn:Max}), respectively. Let $A$ be a multiplication operator in a real Hilbert space $\mathcal{H}$.

    Let us assume in addition that
    \begin{enumerate}
    \renewcommand{\labelenumi}{(\roman{enumi})}
        \item the spectrum of $A$ contains the half-line $(\lambda_0^2,+\infty)$ for some real number $\lambda_0\geq 0$,

        \item  $\Lambda_2>\Lambda_1$, and
        \begin{equation}
            \sup\left\{\MM(\tau)\DD(\tau):\tau\geq t_0\right\}=+\infty,
    \nonumber
        \end{equation}

        \item $\gamma\in C^3([t_0,+\infty))$, and there exists a constant $\Lambda_3$ such that
        \begin{equation}
            \max\{|\gamma'(t)|,|\gamma''(t)|,|\gamma'''(t)|\}\leq
            \Lambda_3\gamma(t)
            \qquad
            \forall t\geq t_0.
            \label{hp:gamma-C3}
        \end{equation}
    \end{enumerate}

    Then there exist a coefficient $c\in\PS(t_0,\Lambda_1,\Lambda_2,\DD,\mmu)$, a nonzero solution $u(t)$ to (\ref{eqn:main}), a positive real number $H_5$, and a sequence $\{b_k\}\subseteq[t_0,+\infty)$ with $b_k\to +\infty$ such that
    \begin{equation}
        \E_u(b_k)\geq\E_u(t_0)\cdot\exp\left(H_5 M(b_k)^{1/2}\right)
        \qquad
        \forall k\geq 1.
        \label{th:counterexample}
    \end{equation}
\end{thm}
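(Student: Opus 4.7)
The plan is to follow the classical DGCS-style two-step construction, tuning parameters to match the rate $\exp(H_5 M(t)^{1/2})$ predicted by Theorem~\ref{thm:est-above}. Step~1 (Proposition~\ref{prop:main}) is a single-frequency activation: given $a\geq t_0$, a frequency $\lambda\geq\mmu(a)$, and a length $T>0$, produce a coefficient $c$ on $[a,a+T]$ with $c(a)=c(a+T)=c_\infty$, satisfying (\ref{hp:hyp})--(\ref{hp:der}), such that the ODE $\ul''+\lambda^2c(t)\ul=0$ admits a solution whose energy is exponentially amplified. I would take $c(t)=c_\infty+\delta\,\varphi(t)$ with $\varphi$ a smooth bump compactly supported in $[a,a+T]$, oscillating at the resonant frequency $2\lambda c_\infty^{1/2}$, and amplitude $\delta\sim\mmu(a)^2/\lambda^2$ saturating the $|c''|\leq\mmu(a)^2$ bound; then $|c'|\lesssim\mmu(a)^2/\lambda\leq\mmu(a)$ as soon as $\lambda\geq\mmu(a)$. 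Parametric resonance then yields an amplification factor $\exp(H\,\lambda\delta T)=\exp(H\,\mmu(a)^2T/\lambda)$ with $H>0$ absolute, at a stabilization cost $\int_a^{a+T}|c-c_\infty|\,d\tau\lesssim\delta T$.

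Step~2 is the iteration. By hypothesis~(ii) one can pick $\tau_k\to+\infty$ with $\MM(\tau_k)\DD(\tau_k)\to+\infty$, choosing each $\tau_k$ as a running maximum so that $M(\tau_k)=\MM(\tau_k)\DD(\tau_k)$. Along a sufficiently sparse subsequence, fix disjoint intervals $[a_k,b_k]$ with $b_k=\tau_k$, and apply Step~1 on each with $\lambda_k\sim\sqrt{\MM(\tau_k)/\DD(\tau_k)}$ and $T_k\sim\MM(\tau_k)/\mmu(a_k)^2$. With these choices the per-activation amplification is $\exp(H'\sqrt{M(b_k)})$ and the per-activation stabilization cost is $\delta_kT_k\sim\DD(\tau_k)$. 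Outside $\bigcup_k[a_k,b_k]$, set $c\equiv c_\infty$. The resulting $c$ is $C^2$ thanks to the compactly supported bumps, and hypothesis~(\ref{hp:gamma-C3}) guarantees that $\mmu$ varies slowly on each $[a_k,b_k]$, so (\ref{hp:der}) holds uniformly. Sparsity of the $\tau_k$ (for instance $\DD(\tau_{k+1})\leq\DD(\tau_k)/2$) ensures that $\sum_{j\geq k}\delta_jT_j$ is controlled by $\DD(a_k)$, whence (\ref{hp:stab}) holds globally.

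Finally, the solution $u$ is assembled in the generalized Fourier variable. By hypothesis~(i) the spectrum of $A$ contains a half-line, so we may place initial data into narrow spectral bands around each $\lambda_k$ with amplitudes $\alpha_k$. During activation~$k$ the corresponding mode is amplified by $\exp(H'\sqrt{M(b_k)})$, while every off-resonant mode grows by at most $\exp(H_2\sqrt{M(t)})$ by Theorem~\ref{thm:est-above}. Choosing $H_5<H'$ (with the factor between energy and amplitude absorbed into $H'$) and amplitudes $\alpha_k^2\lambda_k^2=2^{-k}$, the $k$-th contribution alone yields $\E_u(b_k)\geq\E_u(t_0)\exp(H_5\sqrt{M(b_k)})$ for $k$ large, which is (\ref{th:counterexample}). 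The hardest technical point is the cross-talk: I must verify that modes with frequency $\lambda\not\sim\lambda_k$ are not resonantly amplified during activation~$k$. This is exactly what Theorem~\ref{thm:est-above} provides, since it gives a uniform upper bound of the same exponential form on all modes; the on-resonance bonus produced by Step~1 beats this uniform bound, leaving an exponential gap of precisely the order claimed.
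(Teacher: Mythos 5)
Your proposal follows the same DGCS two-step outline as the paper (single-frequency activation via an explicit oscillating coefficient, then iterate over a sparse sequence of intervals, then assemble the solution from narrow spectral bands), but the activation step as you describe it does not work, and this is not a cosmetic issue.

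You take a constant-amplitude bump $c(t)=c_\infty+\delta\varphi(t)$ with $\delta\sim\mmu(a)^2/\lambda^2$ held fixed over $[a,b]$. This fails in both monotonicity regimes. If $\mmu$ is non-increasing, then $\mmu(t)<\mmu(a)$ for $t>a$ and the pointwise constraint $|c''(t)|\leq\mmu(t)^2$ is violated over most of a long activation interval. If $\mmu$ is non-decreasing, the pointwise bounds are fine but the required length $T\sim\MM(b)/\mmu(a)^2$ never fits inside $[a,b]$, since $\MM(b)\geq\int_a^b\mmu^2\,d\tau\geq\mmu(a)^2(b-a)$ gives $T\geq b-a$ with equality only in the degenerate constant case. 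Your appeal to hypothesis (\ref{hp:gamma-C3}) to claim that $\mmu$ ``varies slowly'' on $[a_k,b_k]$ is not correct: that hypothesis controls only the logarithmic derivative, so over an interval of length $T$ the ratio $\mmu(b)/\mmu(a)$ can still be as large as $e^{\Lambda_3T}$, and these intervals must be made arbitrarily long for the iteration to work. The paper's fix is essential: take a time-varying amplitude $\ep(t)=\frac{\ep_0}{\lambda}\theta(t)\mmu(t)^2$ inside the DGCS ansatz (\ref{defn:DGCS-c}), so that $|c''(t)|\lesssim\ep_0\mmu(t)^2$ pointwise and the amplification becomes $\exp\bigl(\frac{c\,\ep_0}{\lambda}(\MM(b)-\MM(a))\bigr)$ regardless of the length of $[a,b]$.

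There is a secondary gap in the assembly. Prescribing amplitude $\alpha_k$ at $t_0$ leaves two things uncontrolled: the phase of the $k$-th mode at $a_k$ (a general solution of (\ref{eqn:ode}) is a combination of the growing DGCS solution (\ref{defn:DGCS-w}) and a non-growing one, and only the growing component is amplified), and the possible energy change of the mode during $[t_0,a_k]$, in particular across the earlier activations $[a_j,b_j]$, $j<k$. The paper sidesteps both by constructing $v_k$ with the correct phase and unit energy at $a_k$, then controlling the backward evolution via the estimate from \emph{below} (Remark~\ref{rmk:below}), which gives $E_k(t_0)\leq H_{17}\exp(H_{18}M(A_k)^{1/2})$; this potential loss is then dominated by enforcing $M(b_k)\geq 4k^2M(A_k)$ via the parameter $L$ in Proposition~\ref{prop:main}. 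Your final ``cross-talk'' paragraph has the logic reversed: Theorem~\ref{thm:est-above} gives an upper bound on \emph{every} mode, including the on-resonance one, so nothing can ``beat'' it; the relevant concern is that off-resonance modes should not \emph{decay} too much while waiting for their activation, and that is precisely what the lower bound of Remark~\ref{rmk:below} supplies.
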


We conclude with two brief comments on the assumptions and the conclusion of Theorem~\ref{thm:optimality}.

\begin{rmk}\label{rmk:spectrum}
\begin{em}
    Let us discuss the assumption regarding the spectrum of $A$.
    
    If the function $G(t)S(t)$ is non-decreasing, and hence $M(t)=G(t)S(t)$ for every $t\geq t_0$, then assumption (i) in Theorem~\ref{thm:optimality} in not required, and it is enough that the spectrum of $A$ is unbounded. This scenario applies whenever $S(t)$ and $\mmu(t)$ are powers of $t$, since in that case the function $G(t)S(t)$ is unbounded if and only if it is increasing (see the discussion in section~\ref{sec:powers} below).

    When $G(t)S(t)$ is not necessarily monotone, we can still replace assumption~(i) by the weaker requirement that there exist two real numbers $\lambda_0\geq 0$ and $\Gamma_0>1$ with the property that, for every $\lambda\geq\lambda_0$, the interval $(\lambda,\Gamma_0\lambda)$ intersects the spectrum of $A$.  Notably, this condition is met when $A$ is (minus) the Dirichlet Laplacian in a bounded open set.

    Both variants can be proved with a minor adjustment to the definition of $\lambda$ in the proof of Proposition~\ref{prop:main}.
    
\end{em}    
\end{rmk}

\begin{rmk}
\begin{em}
    The conclusion of Theorem~\ref{thm:optimality} can be further strengthened by showing that the set of coefficients $c(t)$ that serve as counterexamples is actually residual (in the sense of Baire category) with respect to a suitable metric in the set of all admissible propagation speeds. This technique has been applied numerous times in previous works. For further details, see~\cite{gg:residual,gg:DGCS-critical,gg:OptDerLoss}.
\end{em}    
\end{rmk}

\subsection{Some classical examples}\label{sec:powers}

Let us examine the consequences of our results in some special cases. Let us start with the case where $t_0>0$ and
\begin{equation}
    \mmu(t):=t^{-\beta}
    \qquad\text{and}\qquad
    \DD(t):=t^\alpha
    \label{defn:powers}
\end{equation}
with $\beta\in\re$ and $\alpha<0$. If $\beta\neq 1/2$, then
\begin{equation}
    G(t)=\frac{1}{1-2\beta}\left(t^{1-2\beta}-t_0^{1-2\beta}\right),
    \nonumber
\end{equation}
and hence
\begin{equation}
    G(t)\DD(t)=
    \frac{1}{1-2\beta}\left(t^{1-2\beta}-t_0^{1-2\beta}\right)t^\alpha.
    \label{eqn:Mt}
\end{equation}

If $2\beta\geq 1+\alpha$, this function is bounded for $t\geq t_0$. If $2\beta< 1+\alpha$, this function is increasing, and therefore
\begin{equation}
    M(t)=G(t)\DD(t)\sim t^{1+\alpha-2\beta}
    \qquad
    \text{as }t\to +\infty.
    \nonumber
\end{equation}

The case $\beta=1/2$ is special because 
\begin{equation}
    G(t)=\log\left(\frac{t}{t_0}\right)
    \qquad\text{and}\qquad
    G(t)\DD(t)=t^\alpha\log\left(\frac{t}{t_0}\right),
    \label{eqn:Mt-log}
\end{equation}
so that $G(t)\DD(t)$ is bounded from above for all $\alpha<0$.

Thus from Theorem~\ref{thm:est-above}, Theorem~\ref{thm:optimality} and Remark~\ref{rmk:spectrum} we deduce the following result, which can be summed up as  ``GEC holds if and only if $2\beta\geq 1+\alpha$''.

\begin{cor}[The case of powers]
    Let $t_0$ and $\Lambda_1\leq\Lambda_2$ be three positive real numbers, and let $\mmu(t)$ and $\DD(t)$ be defined by (\ref{defn:powers}) with $\beta\in\re$ and $\alpha<0$.
    
    Then the following statements hold true.
    \begin{enumerate}
    \renewcommand{\labelenumi}{(\arabic{enumi})}
        \item If $2\beta\geq 1+\alpha$, then  GEC holds true in the class $\PS(t_0,\Lambda_1,\Lambda_2,\DD,\mmu)$.

        \item If $2\beta< 1+\alpha$, then for every $c\in\PS(t_0,\Lambda_1,\Lambda_2,\DD,\mmu)$ it turns out that all solutions $u$ to (\ref{eqn:main}) satisfy
        \begin{equation}
            \E_u(t)\leq\E_u(t_0)\cdot
            H_6\exp\left(H_7 t^{(\alpha+1)/2-\beta}\right)
            \qquad
            \forall t\geq t_0
    \nonumber
        \end{equation}
        for suitable constants $H_6$ and $H_7$ that depend only on $t_0$, $\Lambda_1$, $\Lambda_2$, $\alpha$, $\beta$.

        \item If $2\beta< 1+\alpha$, and in addition $\Lambda_2>\Lambda_1$ and the spectrum of $A$ is unbounded, then there exist a coefficient $c\in\PS(t_0,\Lambda_1,\Lambda_2,\DD,\mmu)$, a solution $u$ to (\ref{eqn:main}), and a positive real number $H_8$ such that
        \begin{equation}
            \limsup_{t\to+\infty}
            \E_u(t)\exp\left(-H_8 t^{(\alpha+1)/2-\beta}\right)=+\infty.
    \nonumber
        \end{equation}
    \end{enumerate}

\end{cor}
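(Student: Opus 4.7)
The plan is to deduce the three statements by specializing Theorem~\ref{thm:est-above} (together with the GEC corollary that follows it) and Theorem~\ref{thm:optimality} (together with Remark~\ref{rmk:spectrum}) to the power coefficients $\mmu(t)=t^{-\beta}$ and $\DD(t)=t^\alpha$. The algebraic heart of the argument is already carried out in formulas (\ref{eqn:Mt}) and (\ref{eqn:Mt-log}); what remains is a case distinction based on the sign of $1-2\beta+\alpha$, plus routine verifications of admissibility.

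First I would check that $\DD(t)=t^\alpha$ (positive, continuous, non-increasing since $\alpha<0$, and vanishing at infinity) and $\mmu(t)=t^{-\beta}$ (positive, continuous, monotone) satisfy the requirements of Definition~\ref{defn:PS}. For~(1), if $2\beta\geq 1+\alpha$, formula (\ref{eqn:Mt}) shows that $G(t)\DD(t)$ is bounded on $[t_0,+\infty)$ when $\beta\neq 1/2$, while (\ref{eqn:Mt-log}) handles $\beta=1/2$ directly (the logarithm is absorbed by $t^\alpha$ since $\alpha<0$). Hence $M(t)$ is bounded and the GEC corollary following Theorem~\ref{thm:est-above} yields~(1). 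For~(2), if $2\beta<1+\alpha$, then necessarily $\beta<1/2$ (because $\alpha<0$ forces $2\beta<1$), so (\ref{eqn:Mt}) applies and $G(t)\DD(t)\sim C\,t^{1+\alpha-2\beta}$, giving $M(t)\leq C'\,t^{1+\alpha-2\beta}$; feeding this into Theorem~\ref{thm:est-above} produces the bound of~(2), with $H_6$ and $H_7$ built from $H_1,\ldots,H_4$ and the constant $C'$, according to whether $\mmu$ is non-increasing or non-decreasing.

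For statement~(3) I would invoke Theorem~\ref{thm:optimality} with the relaxed spectral condition from Remark~\ref{rmk:spectrum}. Hypothesis~(ii) is immediate from the growth $M(t)\sim t^{1+\alpha-2\beta}\to+\infty$ just computed. Hypothesis~(iii) holds for $\gamma(t)=t^{-\beta}$: differentiating gives $|\gamma^{(k)}(t)|=|\beta(\beta+1)\cdots(\beta+k-1)|\,t^{-\beta-k}\leq C_k t_0^{-k}\gamma(t)$ for $k=1,2,3$, so one may take $\Lambda_3=\max_{1\leq k\leq 3}C_k t_0^{-k}$. To remove hypothesis~(i) via Remark~\ref{rmk:spectrum} I need $G(t)\DD(t)$ non-decreasing on $[t_0,+\infty)$; since $\beta<1/2$ and $1-2\beta+\alpha>0$, a direct differentiation gives $(1-2\beta)^{-1}[(1-2\beta+\alpha)t^{\alpha-2\beta}-\alpha t_0^{1-2\beta}t^{\alpha-1}]$, in which both summands in the bracket are positive (the second because $\alpha<0$), so $G(t)\DD(t)$ is strictly increasing and coincides with $M(t)$. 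Theorem~\ref{thm:optimality} then provides $c$, $u$, and a sequence $b_k\to+\infty$ with $\E_u(b_k)/\E_u(t_0)\geq \exp(H_5 M(b_k)^{1/2})\geq \exp(H_8 b_k^{(1+\alpha)/2-\beta})$, which immediately gives the claimed $\limsup$.

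The main obstacle is purely bookkeeping: tracking the two monotonicity branches of $\mmu$ in Theorem~\ref{thm:est-above}, handling the borderline case $\beta=1/2$ in~(1) where the logarithm must be absorbed by $t^\alpha$, and verifying the monotonicity of $G(t)\DD(t)$ in~(3) so that Remark~\ref{rmk:spectrum} can be applied. No new analytic input is required beyond the two main theorems already proved.
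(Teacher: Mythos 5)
Your proposal is correct and follows essentially the same route as the paper: the paper's ``proof'' of this corollary is precisely the computation of $G(t)$ and $G(t)\DD(t)$ in (\ref{eqn:Mt}) and (\ref{eqn:Mt-log}) preceding the statement, combined with an appeal to Theorem~\ref{thm:est-above}, Theorem~\ref{thm:optimality}, and Remark~\ref{rmk:spectrum}. You simply spell out the routine verifications (admissibility, the $C^3$ bound~(\ref{hp:gamma-C3}) for $\gamma(t)=t^{-\beta}$, and the strict monotonicity of $G(t)\DD(t)$ needed to invoke Remark~\ref{rmk:spectrum}) that the paper leaves implicit, and all of them check out.
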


Let us consider now the limit case $\alpha=0$. This scenario technically falls outside the framework of this paper, since we assumed from the very beginning that $S(t)\to 0$ as $t\to +\infty$. On the other hand, any function that satisfies (\ref{hp:stab}) with $S(t)\equiv C_0$ also satisfies the same estimate with some $\widehat{S}(t)$ that tends to~0 at infinity.

If $\beta\neq 1/2$, setting  $\alpha=0$ in (\ref{eqn:Mt}) we deduce that $M(t)$ is bounded if $\beta>1/2$, and $M(t)\sim t^{1-2\beta}$ if $\beta<1/2$. If $\beta=1/2$, setting $\alpha=0$ in (\ref{eqn:Mt-log}) we deduce that $M(t)=\log(t/t_0)$ is now unbounded, in contrast with the case $\alpha<0$. In both cases the corresponding growth rate provided by Theorem~\ref{thm:est-above} can not be optimal because it does not fully account for the ``true'' stabilization rate of the coefficient $c(t)$. 

The precise result is stated in Corollary~\ref{cor:alpha=0} below. 
The short version is that GEC holds if and only if $\beta>1/2$. We stress that now GEC fails also for $\beta=1/2$.

\begin{cor}[The limit case $\alpha=0$]\label{cor:alpha=0}  
    Let $t_0$ and $\Lambda_1\leq\Lambda_2$ be three positive real numbers, and let $\mmu(t):=t^{-\beta}$ for some $\beta\in\re$. Let $\MM$ and $M$ be the functions defined in (\ref{defn:G}) and (\ref{defn:Max}), respectively. 
    
    Then the following statements hold true.
    \begin{enumerate}
    \renewcommand{\labelenumi}{(\arabic{enumi})}
        \item If $\beta>1/2$, then  GEC holds true in the class $\PS(t_0,\Lambda_1,\Lambda_2,\DD,\mmu)$ for every choice of the stabilization rate $S$.

        \item If $\beta<1/2$, then for every stabilization rate $\DD$, and for every coefficient $c\in\PS(t_0,\Lambda_1,\Lambda_2,\DD,\mmu)$, it turns out that all solutions $u$ to (\ref{eqn:main}) satisfy
        \begin{equation}
            \E_u(t)\leq\E_u(t_0)\cdot
            H_{9}\exp\left(H_{10}t^{1/2-\beta}\right)
            \qquad
            \forall t\geq t_0
    \nonumber
        \end{equation}
        for suitable constants $H_{9}$ and $H_{10}$ that depend only on $t_0$, $\Lambda_1$, $\Lambda_2$, $\beta$, and $\DD(t_0)$.

        \item If $\beta=1/2$, then for every stabilization rate $\DD$, and for every coefficient $c\in\PS(t_0,\Lambda_1,\Lambda_2,\DD,\mmu)$, it turns out that all solutions $u$ to (\ref{eqn:main}) satisfy
        \begin{equation}
            \E_u(t)\leq\E_u(t_0)\cdot
            H_{11}\exp\left(H_{12}|\log t|^{1/2}\right)
            \qquad
            \forall t\geq t_0
     \nonumber
       \end{equation}
        for suitable constants $H_{11}$ and $H_{12}$ that depend only on $t_0$, $\Lambda_1$, $\Lambda_2$, and $\DD(t_0)$.

        \item Let us assume that $\beta< 1/2$, and in addition $\Lambda_2>\Lambda_1$ and the spectrum of $A$ contains the half-line $(\lambda_0^2,+\infty)$ for some real number $\lambda_0\geq 0$. Let $\DD$ be a stabilization rate such that
        \begin{equation}
            \sup\left\{\DD(\tau)\tau^{1-2\beta}:\tau\geq t_0\right\}=+\infty.
     \nonumber
       \end{equation}
        
        Then there exist a coefficient $c\in\PS(t_0,\Lambda_1,\Lambda_2,\DD,\mmu)$, a solution $u$ to (\ref{eqn:main}), and a positive real number $H_{13}$ such that
        \begin{equation}
            \limsup_{t\to+\infty}
            \E_u(t)\exp\left(-H_{13} M(t)^{1/2}\right)=+\infty.
             \label{th:no-GEC}
        \end{equation}

        \item Let us assume that $\beta=1/2$, and in addition $\Lambda_2>\Lambda_1$ and the spectrum of $A$ contains the half-line $(\lambda_0^2,+\infty)$ for some real number $\lambda_0\geq 0$. Let $\DD$ be a stabilization rate such that
        \begin{equation}
            \sup\left\{\DD(\tau)\log(\tau):\tau\geq t_0\right\}=+\infty.
    \nonumber
        \end{equation}
        
        Then there exist a coefficient $c\in\PS(t_0,\Lambda_1,\Lambda_2,\DD,\mmu)$, a solution $u$ to (\ref{eqn:main}), and a positive real number $H_{13}$ for which (\ref{th:no-GEC}) holds true.
    \end{enumerate}

\end{cor}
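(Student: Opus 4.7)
The plan is to deduce all five parts directly from the two main theorems already established, the analytic work being contained in Theorems~\ref{thm:est-above} and~\ref{thm:optimality}. The corollary reduces to a bookkeeping exercise, amounting to computing $\MM(t)=\int_{t_0}^{t}\tau^{-2\beta}\,d\tau$ in the power case, namely $\MM(t)=(t^{1-2\beta}-t_0^{1-2\beta})/(1-2\beta)$ for $\beta\neq 1/2$ and $\MM(t)=\log(t/t_0)$ for $\beta=1/2$. Since $\DD$ is non-increasing and $\MM$ non-decreasing, $\MM(\tau)\DD(\tau)\leq \MM(t)\DD(t_0)$ for every $\tau\in[t_0,t]$, so that $M(t)\leq \MM(t)\DD(t_0)$.

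For the positive statements~(1)--(3) I would proceed as follows. When $\beta>1/2$ the function $\MM$ is bounded on $[t_0,+\infty)$, hence so is $M$, and GEC follows from the corollary of Theorem~\ref{thm:est-above}. When $\beta<1/2$ we have $M(t)\leq\DD(t_0)\,t^{1-2\beta}/(1-2\beta)$, so Theorem~\ref{thm:est-above} applied in case~(1) or~(2) depending on the sign of $\beta$ (since $\mmu(t)=t^{-\beta}$ is monotone in either case) yields the bound of part~(2), with $H_{10}$ equal to the relevant constant $H_2$ or $H_4$ multiplied by $\sqrt{\DD(t_0)/(1-2\beta)}$. When $\beta=1/2$, the same reasoning with $M(t)\leq\DD(t_0)\log(t/t_0)$ gives part~(3), after absorbing $\log t_0$ into the final constant $H_{12}$.

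For the counterexamples~(4)--(5) I would invoke Theorem~\ref{thm:optimality} after checking its three hypotheses. Assumption~(i) on the spectrum is part of the hypothesis. Assumption~(iii) is verified by a direct calculation: each derivative $\gamma^{(k)}(t)$ for $k\leq 3$ is a constant multiple of $t^{-\beta-k}$, so $|\gamma^{(k)}(t)|/\gamma(t)\leq C(\beta,k)/t_0^{k}$, giving (\ref{hp:gamma-C3}) with an appropriate $\Lambda_3$. Assumption~(ii) follows from the explicit form of $\MM$: the product $\MM(\tau)\DD(\tau)$ is comparable, up to multiplicative constants, to $\tau^{1-2\beta}\DD(\tau)$ in case~(4) and to $\log(\tau)\DD(\tau)$ in case~(5), and the hypotheses on $\DD$ then yield $\sup\MM(\tau)\DD(\tau)=+\infty$. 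Theorem~\ref{thm:optimality} then produces a coefficient $c$, a solution $u$, and a sequence $b_k\to+\infty$ with $\E_u(b_k)\geq\E_u(t_0)\exp(H_5 M(b_k)^{1/2})$. Choosing $H_{13}:=H_5/2$, estimate (\ref{th:no-GEC}) follows once we note that $M(b_k)\to+\infty$, which is forced by the monotonicity of $M$ together with its unboundedness (otherwise the limit of $M(b_k)$ would serve as a finite upper bound for $M$ on $[t_0,+\infty)$).

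The main obstacle is essentially nonexistent: all genuine analytic difficulty is encapsulated in the two main theorems, and the only point requiring a brief argument is the verification that $M(b_k)\to+\infty$ along the sequence produced by Theorem~\ref{thm:optimality}.
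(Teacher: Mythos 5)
Your proposal is correct and takes essentially the same route as the paper, which also treats the corollary as a direct specialization of Theorems~\ref{thm:est-above} and~\ref{thm:optimality} via the explicit computation of $\MM(t)$ for $\mmu(t)=t^{-\beta}$ and the uniform bound $M(t)\leq\MM(t)\DD(t_0)$. One small slip in part~(3): when $t_0<1$ the factor $\exp\bigl(H_4\sqrt{\DD(t_0)\log(1/t_0)}\bigr)$ coming from $\sqrt{\log(t/t_0)}\leq\sqrt{|\log t|}+\sqrt{\log(1/t_0)}$ should be absorbed into the multiplicative constant $H_{11}$, not into $H_{12}$ (and when $t_0\geq 1$ no absorption is needed, since then $\log(t/t_0)\leq|\log t|$ for $t\geq t_0$).
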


We conclude with an example that shows that in the limit case $\beta=1/2$, and more generally in the case $\beta=1/m$ if we have assumptions of the form (\ref{hp:der-hir}), we can not eliminate the stabilization conditions (cfr.~\cite[Theorem~1.6]{2021-JMAA-Hirosawa}), even in the very special case where the operator $A$ is the identity. 

\begin{ex}[Failure of GEC without stabilization]\label{ex:no-way}
\begin{em}
    The function
    \begin{equation}
        w(t):=\sin(t)\exp\left(\frac{1}{8}\int_1^t\frac{1}{\tau}\sin^2(\tau)\,d\tau\right)
        \qquad
        \forall t\geq 1
        \label{defn:w-easy}
    \end{equation}
    is a solution to (\ref{eqn:ode}) for $t\geq 1$, with $\lambda:=1$ and
    \begin{equation}
        c(t):=1-\frac{\sin(2t)}{4t}+\frac{\sin^2(t)}{8t^2}-\frac{\sin^4(t)}{64t^2}
        \qquad
        \forall t\geq 1.
    \nonumber
    \end{equation}

    It is possible to check that 
    \begin{equation}
        |c^{(k)}(t)|\leq\frac{C_k}{t}
        \qquad
        \forall t\geq 1,
        \quad
        \forall k\geq 1,
    \nonumber
    \end{equation}
    and hence (\ref{hp:der-hir}) is true for every positive integer $m$, and in addition the integral of $|c(t)-1|$ grows as $\log t$. 
    
    On the other hand, the argument of the exponential in (\ref{defn:w-easy}) grows as $(\log t)/16$, and therefore the energy of the solution grows as $t^{1/8}$ and GEC fails.

    Starting from this example, one can construct a solution to (\ref{eqn:main}), or even to the wave equation (\ref{eqn:wave}) in any domain, with the same properties. 
\end{em}
\end{ex}


\setcounter{equation}{0}
\section{Estimates from above (proof of Theorem~\ref{thm:est-above})}\label{sec:positive}

Let us consider the family of ordinary differential equations (\ref{eqn:ode}). For every $\lambda>0$, and for every solution $\ul$, we introduce the \emph{Kowaleskian energy}
\begin{equation}
    \ekow(t):=\frac{\ul'(t)^2}{c_\infty^{1/2}}+\lambda^2 c_\infty^{1/2}\ul(t)^2,
    \nonumber
\end{equation}
where $c_\infty$ is the constant that appears in the stabilization assumption (\ref{hp:stab}). We should write more precisely $E_{\mathrm{kow},\lambda,u_\lambda}(t)$, in order to emphasize that this quantity depends also on $\lambda$ and on the solution $\ul$, but since we are dealing with one solution at each time we prefer to shorten the notation. 

It is well-known that solutions to (\ref{eqn:main}) with some coefficient $c(t)$ satisfy (\ref{th:mu-up}) if, for every $\lambda>0$, all solutions to (\ref{eqn:ode}) with the same coefficient $c(t)$ satisfy
\begin{equation}
    \ekow(t)\leq\ekow(t_0)\cdot H_1\exp\left(H_2 M(t)^{1/2}\right)
    \qquad
    \forall t\geq t_0
    \label{th:mu-up-ode}
\end{equation}
with the same constants $H_1$ and $H_2$, and similarly for (\ref{th:mu-down}).

In order to establish inequalities of this type, we exploit some classical energy estimates, that we describe below.

\subsection{Classical energy estimates}

\paragraph{\textmd{\textit{Kowaleskian estimate}}}

The time-derivative of the Kowaleskian energy is given by
\begin{equation}
    \ekow'(t)=-2\lambda^2\frac{c(t)-c_\infty}{c_\infty^{1/2}}\ul'(t) \ul(t),
    \nonumber
\end{equation}
from which we deduce the estimate
\begin{equation}
    \ekow'(t)\leq\lambda\frac{|c(t)-c_\infty|}{c_\infty^{1/2}}\ekow(t)
    \qquad
    \forall t\geq t_0,
    \nonumber
\end{equation}
and hence
\begin{equation}
    \ekow(b)\leq\ekow(a)\exp\left(\frac{\lambda}{\Lambda_1^{1/2}}\int_a^b|c(\tau)-c_\infty|\,d\tau\right)
    \label{est:kow}
\end{equation}
for every interval $[a,b]\subseteq[t_0,+\infty)$.

\paragraph{\textmd{\textit{Tarama estimate}}}

Following~\cite{2007-EJDE-Tarama}, let us introduce now the \emph{Tarama energy} (again we do not write the explicit dependence on $\lambda$ and $\ul$)
\begin{equation}
    \etar(t):=
    \frac{\ul'(t)^2}{c(t)^{1/2}}+\lambda^2 c(t)^{1/2}\ul(t)^2+\frac{1}{2}\frac{c'(t)}{c(t)^{3/2}}\ul'(t)\ul(t).
    \nonumber
\end{equation}

Let us consider any interval $[a,b]\subseteq[t_0,+\infty)$ such that
\begin{equation}
    \lambda\geq\frac{1}{2}\frac{|c'(t)|}{c(t)^{3/2}}
    \qquad
    \forall t\in[a,b].
    \label{hp:tar}
\end{equation}

In this interval it turns out that
\begin{equation}
    \left|\frac{1}{2}\frac{c'(t)}{c(t)^{3/2}}\ul'(t)\ul(t)\right|\leq
    \frac{1}{2}\frac{\ul'(t)^2}{c(t)^{1/2}}+
    \frac{1}{2}\lambda^2c(t)^{1/2}\ul(t)^2,
    \nonumber
\end{equation}
and hence
\begin{equation}
    \frac{1}{2}\left(
    \frac{\ul'(t)^2}{c(t)^{1/2}}+
    \lambda^2c(t)^{1/2}\ul(t)^2\right)\leq
    \etar(t)\leq\frac{3}{2}\left(
    \frac{\ul'(t)^2}{c(t)^{1/2}}+
    \lambda^2c(t)^{1/2}\ul(t)^2\right).
    \label{est:etar-1}
\end{equation}

Since we can write
\begin{equation}
    \frac{\ul'(t)^2}{c(t)^{1/2}}+
    \lambda^2c(t)^{1/2}\ul(t)^2=
    \frac{\ul'(t)^2}{c_\infty^{1/2}}
    \left(\frac{c_\infty}{c(t)}\right)^{1/2}+
    \lambda^2c_\infty^{1/2}\left(\frac{c(t)}{c_\infty}\right)^{1/2}\ul(t)^2,
    \nonumber
\end{equation}
and since
\begin{equation}
    \frac{\Lambda_1}{\Lambda_2}\leq
    \frac{c_\infty}{c(t)}\leq
    \frac{\Lambda_2}{\Lambda_1}
    \qquad\qquad\text{and}\qquad\qquad
    \frac{\Lambda_1}{\Lambda_2}\leq
    \frac{c(t)}{c_\infty}\leq
    \frac{\Lambda_2}{\Lambda_1},
    \nonumber
\end{equation}
from (\ref{est:etar-1}) we obtain that
\begin{equation}
    \frac{1}{2}\left(\frac{\Lambda_1}{\Lambda_2}\right)^{1/2}\ekow(t)\leq\etar(t)\leq \frac{3}{2}\left(\frac{\Lambda_2}{\Lambda_1}\right)^{1/2}\ekow(t)
    \qquad
    \forall t\geq t_0.
    \label{equi-ekow-etar}
\end{equation}

The time-derivative of the Tarama energy is given by
\begin{equation}
    \etar'(t)=
    \left(\frac{c''(t)}{2c(t)^{3/2}}-\frac{3}{4}\frac{c'(t)^2}{c(t)^{5/2}}\right)\ul'(t)\ul(t),
    \nonumber
\end{equation}
from which we deduce the estimate
\begin{eqnarray*}
    \etar'(t) & = &
    \frac{1}{4}\left(\frac{c''(t)}{c(t)^{3/2}}-\frac{3}{2}\frac{c'(t)^2}{c(t)^{5/2}}\right)\frac{1}{\lambda}\cdot 2\ul'(t)\cdot\lambda \ul(t)
    \\[0.5ex]
    & \leq &
    \frac{2\Lambda_1+3}{8\Lambda_1^{5/2}}\cdot\mmu(t)^2\frac{1}{\lambda}\cdot\left(
    \frac{\ul'(t)^2}{c(t)^{1/2}}+
    \lambda^2c(t)^{1/2}\ul(t)^2\right)
    \\[0.5ex]
    & \leq &
    \frac{2\Lambda_1+3}{4\Lambda_1^{5/2}}\cdot\frac{\mmu(t)^2}{\lambda}\cdot\etar(t)
\end{eqnarray*}
for every $t\in[a,b]$, and hence
\begin{equation}
    \etar(b)\leq\etar(a)\exp\left(\frac{2\Lambda_1+3}{4\Lambda_1^{5/2}}\cdot\frac{\MM(b)-\MM(a)}{\lambda}\right).
    \label{est:tar}
\end{equation}

\paragraph{\textmd{\textit{Mixed estimate}}}

Let $a$, $b$, $c$ be real numbers with $t_0\leq a<b<c$. Let us assume that (\ref{hp:tar}) holds true, so that we can use the Tarama energy in $[a,b]$. If we apply the Tarama estimate in $[a,b]$ we obtain (\ref{est:tar}), while from the Kowaleskian estimate in $[b,c]$ we obtain that
\begin{equation}
    \ekow(c)\leq \ekow(b)\exp\left(\frac{\lambda}{\Lambda_1^{1/2}}\int_b^c|c(\tau)-c_\infty|\,d\tau\right).    
    \nonumber
\end{equation}

If we take (\ref{equi-ekow-etar}) into account, by combining these two estimates we conclude that
\begin{eqnarray}
     \ekow(c) & \leq & 
     \frac{3\Lambda_2}{\Lambda_1} \ekow(a)\cdot
    \nonumber
     \\
     & & \mbox{}\cdot\exp\left(\frac{2\Lambda_1+3}{4\Lambda_1^{5/2}}\cdot\frac{\MM(b)-\MM(a)}{\lambda}+\frac{\lambda}{\Lambda_1^{1/2}}\int_b^c|c(\tau)-c_\infty|\,d\tau\right).
     \label{est:mix}
\end{eqnarray}

We call the latter the \emph{mixed estimate in $[a,c]$ with switch in $b$}. We observe that the same estimate is true also in the limit cases where either $b=a$ or $b=c$.



\subsection{Proof of statement~(1)}\label{sec:proof-up}

In this subsection we assume that $\mmu$ is non-decreasing and we prove that (\ref{th:mu-up-ode}) holds true, with $H_1$ and $H_2$ given by (\ref{defn:C12}), for every $\lambda>0$ and every solution $\ul$ to (\ref{eqn:ode}). The main tool is the following result, where the assumption on $\mmu$ is crucial. 

\begin{lemma}\label{lemma:mu-c}

    Let $t_0$, $\Lambda_1$, $\Lambda_2$, $\DD$ and $\mmu$ be as in Definition~\ref{defn:PS}. Let $\MM$ and $M$ be the functions defined in (\ref{defn:G}) and (\ref{defn:Max}), respectively.

    Let us assume in addition that $\mmu$ is non-decreasing.

    Then for every interval $[a,b]\subseteq[t_0,+\infty)$ it turns out that
    \begin{equation}
        \mmu(a)\int_a^b|c(\tau)-c_\infty|\,d\tau\leq
        2(\Lambda_2-\Lambda_1)^{1/2}
        M(b)^{1/2}.
        \label{th:lemma-kow}
    \end{equation}
\end{lemma}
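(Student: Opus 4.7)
The plan is to use a free splitting point $\tau^*\in[a,b]$ to bound
$$I := \int_a^b |c(\tau)-c_\infty|\,d\tau \;\leq\; (\Lambda_2-\Lambda_1)(\tau^*-a) + \DD(\tau^*),$$
where the first term comes from the pointwise bound $|c-c_\infty|\leq\Lambda_2-\Lambda_1$ on $[a,\tau^*]$ and the second from applying the stabilization assumption (\ref{hp:stab}) at $\tau^*$. The monotonicity of $\mmu$ enters through the single inequality
$$\mmu(a)^2(\tau^*-a)\leq\int_a^{\tau^*}\mmu(\tau)^2\,d\tau = \MM(\tau^*)-\MM(a)\leq \MM(\tau^*),$$
equivalently $\mmu(a)(\tau^*-a)^{1/2}\leq \MM(\tau^*)^{1/2}$, and this is the only place where the hypothesis on $\mmu$ is used.

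Then I would choose $\tau^*$ so as to balance the two terms in the bound on $I$. The function $\tau\mapsto(\Lambda_2-\Lambda_1)(\tau-a)-\DD(\tau)$ is continuous and non-decreasing (since $\DD$ is non-increasing) and negative at $\tau=a$, so by the intermediate value theorem there exists $\tau^*\in[a,b]$ satisfying $(\Lambda_2-\Lambda_1)(\tau^*-a)=\DD(\tau^*)$ as long as $(\Lambda_2-\Lambda_1)(b-a)\geq \DD(b)$. In that balanced case, each of the two summands of $\mmu(a)\bigl[(\Lambda_2-\Lambda_1)(\tau^*-a)+\DD(\tau^*)\bigr]$ can be rewritten, after multiplying and dividing by suitable powers of $(\tau^*-a)$ and $(\Lambda_2-\Lambda_1)$ and using the balance equality, as $(\Lambda_2-\Lambda_1)^{1/2}(\MM(\tau^*)\DD(\tau^*))^{1/2}$, which is controlled by $(\Lambda_2-\Lambda_1)^{1/2}M(b)^{1/2}$ by the very definition of $M(b)$. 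Summing the two identical contributions produces the factor $2$ in (\ref{th:lemma-kow}). In the remaining case $(\Lambda_2-\Lambda_1)(b-a)<\DD(b)$, the cruder estimate $I\leq(\Lambda_2-\Lambda_1)(b-a)$ combined with the same Cauchy--Schwarz manipulation, and using $(\Lambda_2-\Lambda_1)(b-a)<\DD(b)$ as the final step, already gives $\mmu(a)I\leq(\Lambda_2-\Lambda_1)^{1/2}M(b)^{1/2}$, which is even stronger than required.

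The main subtlety I anticipate is avoiding a seductive but too-weak estimate. A direct Cauchy--Schwarz combining $\int_a^b|c-c_\infty|^2\leq(\Lambda_2-\Lambda_1)\DD(a)$ with $\mmu(a)^2(b-a)\leq\MM(b)-\MM(a)$ produces $(\MM(b)-\MM(a))\DD(a)$ on the right, a quantity that is \emph{not} controlled by $M(b)$ in general, since the maximum of $\MM\cdot\DD$ may be attained near $b$ where $\DD(b)\ll\DD(a)$. It is therefore crucial to exploit the stabilization at a point $\tau^*$ inside $[a,b]$ rather than only at $\tau=a$, so that the product $\MM(\tau^*)\DD(\tau^*)$ appears naturally and is absorbed into $M(b)$ by definition.
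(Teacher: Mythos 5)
Your proof is correct and rests on the same mechanism as the paper's: split at an interior point, use uniform hyperbolicity to control the piece near $a$ by $(\Lambda_2-\Lambda_1)\cdot(\text{length})$, use stabilization to control the piece near $b$ by $\DD$ at the split point, use the non-decreasing monotonicity of $\mmu$ to turn $\mmu(a)^2\cdot(\text{length})$ into $\MM$, and finish with a square-root balancing step and the definition of $M$. The difference lies in where the split point is placed. The paper picks $\tau_0$ so that $\int_a^{\tau_0}|c-c_\infty|=\int_{\tau_0}^b|c-c_\infty|$ and then writes $\mmu(a)\int_a^b = 2\bigl[\mmu(a)^2\int_a^{\tau_0}\bigr]^{1/2}\bigl[\int_{\tau_0}^b\bigr]^{1/2}$, a clean equality that needs no case distinction. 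You instead first pass to the upper bound $\int_a^b|c-c_\infty|\le(\Lambda_2-\Lambda_1)(\tau^*-a)+\DD(\tau^*)$ for a free $\tau^*$, then pick $\tau^*$ to equalize the two summands; this is a valid dual of the paper's choice, at the small cost of the extra case $(\Lambda_2-\Lambda_1)(b-a)<\DD(b)$, which you handle correctly (and there you even get a sharper constant). Both routes reach $2(\Lambda_2-\Lambda_1)^{1/2}(\MM(\tau^*)\DD(\tau^*))^{1/2}\le 2(\Lambda_2-\Lambda_1)^{1/2}M(b)^{1/2}$. Your closing observation about why applying the stabilization bound only at $\tau=a$ fails — because $\MM(b)\DD(a)$ need not be controlled by $M(b)$ — is exactly the right diagnosis and is the reason an interior split point is essential.
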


\begin{proof}

Let $\tau_0\in(a,b)$ be such that
\begin{equation}
    \int_a^{\tau_0}|c(\tau)-c_\infty|\,d\tau=
    \int_{\tau_0}^b|c(\tau)-c_\infty|\,d\tau=
    \frac{1}{2}\int_a^b|c(\tau)-c_\infty|\,d\tau.
    \nonumber
\end{equation}

From the uniform hyperbolicity assumption (\ref{hp:hyp}), and the non-decreasing character of $\mmu(t)$, we deduce that
\begin{eqnarray*}
    \mmu(a)^2\int_a^{\tau_0}|c(\tau)-c_\infty|\,d\tau & \leq &
    (\Lambda_2-\Lambda_1)(\tau_0-a)\mmu(a)^2
    \\
    & \leq &
    (\Lambda_2-\Lambda_1)(\MM(\tau_0)-\MM(a))
    \\
    & \leq &
    (\Lambda_2-\Lambda_1)\MM(\tau_0).
\end{eqnarray*}

It follows that
\begin{eqnarray*}
    \mmu(a)\int_a^b|c(\tau)-c_\infty|\,d\tau & = &
    2\left[\mmu(a)^2\int_a^{\tau_0}|c(\tau)-c_\infty|\,d\tau\right]^{1/2}
    \left[\int_{\tau_0}^b|c(\tau)-c_\infty|\,d\tau\right]^{1/2}
    \\[1ex]
    & \leq &
    2(\Lambda_2-\Lambda_1)^{1/2}\MM(\tau_0)^{1/2}\DD(\tau_0)^{1/2},
\end{eqnarray*}
which implies (\ref{th:lemma-kow}).
\end{proof}

In order to prove (\ref{th:mu-up-ode}), we fix $t\geq t_0$, and we distinguish two cases according to the size of $\lambda$.

\paragraph{\textmd{\textit{Small frequencies}}}

Let us assume that
\begin{equation}
    \lambda\leq\frac{\mmu(t_0)}{2\Lambda_1^{3/2}}.
    \label{hp:lambda}
\end{equation}

In this case from Lemma~\ref{lemma:mu-c} applied in the interval $[a,b]:=[t_0,t]$ we deduce that
\begin{equation}
    \lambda\int_{t_0}^{t}|c(\tau)-c_\infty|\,d\tau\leq
    \frac{\mmu(t_0)}{2\Lambda_1^{3/2}}\int_{t_0}^{t}|c(\tau)-c_\infty|\,d\tau\leq
    \left(\frac{\Lambda_2-\Lambda_1}{\Lambda_1^3}\right)^{1/2}
    M(t)^{1/2}.
    \nonumber
\end{equation}

Therefore, from the Kowaleskian estimate (\ref{est:kow}) in $[t_0,t]$ we conclude that
\begin{equation}
    \ekow(t)\leq\ekow(t_0)\cdot
    \exp\left(\frac{(\Lambda_2-\Lambda_1)^{1/2}}{\Lambda_1^2}M(t)^{1/2}\right),
    \nonumber
\end{equation}
which proves (\ref{th:mu-up-ode}) in this case.

\paragraph{\textmd{\textit{Large frequencies}}}

Let us assume that (\ref{hp:lambda}) is false, and let us define
\begin{equation}
    t_1:=\max\left\{\tau\in[t_0,t]:2\Lambda_1^{3/2}\cdot\lambda\geq\mmu(\tau)\right\}.
    \nonumber
\end{equation}

We observe that the inequality in (\ref{hp:tar}) is satisfied for every $t\in[t_0,t_1]$, and hence we are allowed to use the Tarama energy in the interval $[t_0,t_1]$. Now we distinguish two cases.
\begin{itemize}
    \item Let us assume that
    \begin{equation}
        \MM(t_1)\leq
        \frac{4\Lambda_1^2}{2\Lambda_1+3}\cdot
        \lambda^2\int_{t_1}^{t}|c(\tau)-c_\infty|\,d\tau.
        \label{hp:t1}
    \end{equation}

    In this case we apply the mixed estimate (\ref{est:mix}) in the interval $[t_0,t]$ with switch in $t_1$, and we obtain that
    \begin{eqnarray}
        \ekow(t) & \leq &
        \frac{3\Lambda_2}{\Lambda_1}\ekow(t_0)\cdot
        \exp\left(\frac{2\Lambda_1+3}{4\Lambda_1^{5/2}}\cdot
        \frac{1}{\lambda}\MM(t_1)+\frac{\lambda}{\Lambda_1^{1/2}}\int_{t_1}^{t}|c(\tau)-c_\infty|\,d\tau\right)
        \nonumber
        \\
        & \leq &
        \frac{3\Lambda_2}{\Lambda_1}\ekow(t_0)\cdot
        \exp\left(
        \frac{2\lambda}{\Lambda_1^{1/2}}\int_{t_1}^{t}|c(\tau)-c_\infty|\,d\tau\right).
        \label{est:ekow-2.1}
    \end{eqnarray}

If $t_1=t$, then (\ref{est:ekow-2.1}) implies (\ref{th:mu-up-ode}). Otherwise, from the maximality of $t_1$ we deduce that $2\Lambda_1^{3/2}\cdot\lambda=\mmu(t_1)$.  Plugging this identity into (\ref{est:ekow-2.1}), from Lemma~\ref{lemma:mu-c} applied in the interval $[a,b]:=[t_1,t]$ we conclude that
\begin{equation}
    \frac{2\lambda}{\Lambda_1^{1/2}}\int_{t_1}^{t}|c(\tau)-c_\infty|\,d\tau=
    \frac{\mmu(t_1)}{\Lambda_1^2}\int_{t_1}^{t}|c(\tau)-c_\infty|\,d\tau\leq
    2\frac{(\Lambda_2-\Lambda_1)^{1/2}}{\Lambda_1^2}M(t)^{1/2},
    \nonumber
\end{equation}
which implies (\ref{th:mu-up-ode}) in this case.

\item  Let us assume that (\ref{hp:t1}) is false. In this case there exists $t_2\in[t_0,t_1)$ such that
    \begin{equation}
        \MM(t_2)=\frac{4\Lambda_1^2}{2\Lambda_1+3}\cdot
        \lambda^2\int_{t_2}^{t}|c(\tau)-c_\infty|\,d\tau.
        \label{hp:t2}
    \end{equation}

The existence of $t_2$ follows from a continuity argument because the difference between the left-hand side and the right-hand side is positive when $t_2=t_1$, and less than or equal to~0 when $t_2=t_0$. Again we are allowed to use the Tarama energy in the interval $[t_0,t_2]$, and therefore from the mixed estimate (\ref{est:mix}) in $[t_0,t]$ with switch in $t_2$ we obtain that
\begin{equation}
    \ekow(t)\leq\ekow(t_0)\cdot\frac{3\Lambda_2}{\Lambda_1}
    \exp\left(\frac{2\Lambda_1+3}{4\Lambda_1^{5/2}}\cdot
    \frac{\MM(t_2)}{\lambda}+\frac{\lambda}{\Lambda_1^{1/2}}\int_{t_2}^{t}|c(\tau)-c_\infty|\,d\tau\right).
    \nonumber
\end{equation}

Now from (\ref{hp:t2}) we conclude that the argument of the exponential is equal to
\begin{equation}
    \frac{(2\Lambda_1+3)^{1/2}}{\Lambda_1^{3/2}}[\MM(t_2)]^{1/2}\left[\int_{t_2}^{t}|c(\tau)-c_\infty|\,d\tau\right]^{1/2}\leq
    \frac{(2\Lambda_1+3)^{1/2}}{\Lambda_1^{3/2}}[\MM(t_2)\DD(t_2)]^{1/2},    
    \nonumber
\end{equation}
which implies (\ref{th:mu-up-ode}) also in this last case.
\qed

\end{itemize}

\subsection{Proof of statement~(2)}

In this subsection we consider the case where $\mmu$ is non-increasing. We prove that, for every $\lambda>0$, every solution $\ul$ to (\ref{eqn:ode}) satisfies
\begin{equation}
    \ekow(t)\leq\ekow(t_0)\cdot H_3\exp\left(H_4 M(t)^{1/2}\right)
    \qquad
    \forall t\geq t_0,
    \label{th:mu-down-ode}
\end{equation}
with $H_3$ and $H_4$ given by (\ref{defn:C34}). As already observed, this is enough to establish (\ref{th:mu-down}).

As in the previous case we fix $t>t_0$, and we distinguish small and large frequencies.

\paragraph{\textmd{\textit{Small frequencies}}}

If $\lambda$ satisfies (\ref{hp:lambda}), then from the Kowaleskian estimate (\ref{est:kow}) we obtain that
\begin{eqnarray*}
    \ekow(t) & \leq & \ekow(t_0)\cdot
    \exp\left(\frac{\lambda}{\Lambda_1^{1/2}}\int_{t_0}^t|c(\tau)-c_\infty|\,d\tau\right)
    \\
    & \leq &
    \ekow(t_0)\cdot\exp\left(\frac{\mmu(t_0)}{2\Lambda_1^2}\int_{t_0}^t|c(\tau)-c_\infty|\,d\tau\right)
    \\
    & \leq &
    \ekow(t_0)\cdot\exp\left(\frac{\mmu(t_0)}{2\Lambda_1^2}\DD(t_0)\right),
\end{eqnarray*}
which implies (\ref{th:mu-down-ode}) in this case.

\paragraph{\textmd{\textit{Large frequencies}}}

If (\ref{hp:lambda}) is false, then a fortiori $2\Lambda_1^{3/2}\cdot\lambda>\mmu(\tau)$ for every $\tau\in[t_0,t]$, due to the non-increasing character of $\mmu(t)$, and therefore we are allowed to use the Tarama energy wherever we want.

Let $t_1\in[t_0,t]$ be such that
\begin{equation}
    \MM(t_1)=\frac{4\Lambda_1^2}{2\Lambda_1+3}\cdot\lambda^2\int_{t_1}^t|c(\tau)-c_\infty|\,d\tau.
    \nonumber
\end{equation}

The existence of $t_1$ follows from a continuity argument because the difference between the left-hand side and the right-hand side is less than or equal to~0 when $t_1=t_0$, and greater than or equal to~0 when $t_1=t$. 

At this point from the mixed estimate (\ref{est:mix}) in $[t_0,t]$ with switch in $t_1$ we deduce that
\begin{equation}
    \ekow(t)\leq\ekow(t_0)\cdot\frac{3\Lambda_2}{\Lambda_1}\exp\left(
        \frac{2\Lambda_1+3}{4\Lambda_1^{5/2}}\cdot\frac{1}{\lambda}\MM(t_1)+\frac{\lambda}{\Lambda_1^{1/2}}\int_{t_1}^{t}|c(\tau)-c_\infty|\,d\tau\right),
    \nonumber
\end{equation}
and we conclude exactly as at the end of section~\ref{sec:proof-up}.
\qed


\setcounter{equation}{0}
\section{Counterexamples (proof of Theorem~\ref{thm:optimality})}\label{sec:negative}

The starting point in the construction of all counterexamples for wave equations with time-dependent propagation speed is the fundamental observation that the function
\begin{equation}
    w(t):=\frac{1}{m\lambda}\sin(m\lambda t)
    \exp\left(\frac{1}{8m^2}\int_{t_0}^t\ep(s)
    \sin^2(m\lambda s)\,ds\right)
    \label{defn:DGCS-w}
\end{equation}
is a solution to (\ref{eqn:ode}) with
\begin{equation}
    c(t):=m^2-
    \frac{\ep(t)}{4m\lambda}\sin(2m\lambda t)-
    \frac{\ep'(t)}{8m^2\lambda^2}\sin^2(m\lambda t)-
    \frac{\ep(t)^2}{64m^4\lambda^2}\sin^4(m\lambda t).
    \label{defn:DGCS-c}
\end{equation}

The key feature is that $w(t)$ grows exponentially with respect to time.

By choosing properly the frequency $\lambda$, the parameter $m$, and the function $\ep(t)$, we can obtain a coefficient $c(t)$ that respects our requirements in some interval $[a,b]$, and for which (\ref{eqn:ode}) admits a solution whose energy is equal to~1 at time $t=a$, and of order $\exp(M(b)^{1/2})$ (up to constants) at time $t=b$.

\begin{prop}[Activation step]\label{prop:main}
    In the same setting of Theorem~\ref{thm:optimality}, let us set
    \begin{equation}
        c_\infty:=\frac{\Lambda_1+\Lambda_2}{2}.
        \label{defn:c-infty}
    \end{equation}

    Then there exists a real number $H_{14}>0$ with the following property. For every triple of real numbers $A\geq t_0+1$, $L>0$, and $\Lambda\geq\lambda_0$, there exist three real numbers $a\in[A,A+1]$, $b>a$, $\lambda\geq\Lambda$, and a function $c:[a,b]\to(0,+\infty)$ of class $C^2$ such that
    \begin{enumerate}
    \renewcommand{\labelenumi}{(\arabic{enumi})}
        \item  it turns out that
        \begin{equation}
            M(b)\geq L\cdot M(A),
            \label{th:M(b)>>}
        \end{equation}

        \item the function $t\mapsto c(t)-c_\infty$ has compact support in $(a,b)$,

        \item $c$ satisfies the estimates
        \begin{gather}
            \Lambda_1\leq c(t)\leq\Lambda_2
            \qquad
            \forall t\in[a,b],
            \label{th:hyp-ab}
            \\[0.5ex]
            |c'(t)|\leq\mmu(t)
            \quad\text{and}\quad
            |c''(t)|\leq\mmu(t)^2
            \qquad
            \forall t\in[a,b],
            \label{th:der-ab}
            \\[0.5ex]
            \int_a^b|c(t)-c_\infty|\,d\tau\leq\frac{1}{2}\DD(b),
            \label{th:stab-ab}
        \end{gather}

        \item the solution $\ul$ to the ordinary differential equation (\ref{eqn:ode}) with initial data
        \begin{equation}
            \ul(a)=0,
            \qquad
            \ul'(a)=c_\infty^{1/4}
            \label{eqn:data}
        \end{equation}
        satisfies
        \begin{equation}
            \ul'(b)^2\geq \frac{c_\infty^{1/2}}{2}
            \exp\left(H_{14}M(b)^{1/2}\right).
            \label{th:ul'b}
        \end{equation}

    \end{enumerate}
\end{prop}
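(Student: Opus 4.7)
The plan is to apply the DGCS-type construction (\ref{defn:DGCS-w})--(\ref{defn:DGCS-c}) with $m:=c_\infty^{1/2}$ (so that the base value $m^2$ equals $c_\infty$) and with a perturbation of the form $\ep(t):=K\eta(t)\mmu(t)^2/\lambda$, where $K>0$ is a small absolute constant and $\eta:[a,b]\to[0,1]$ is a smooth cutoff that equals $1$ on the bulk of $[a,b]$ and vanishes near each endpoint. The cutoff makes $c(t)-c_\infty$ compactly supported in $(a,b)$, delivering property~(2). The parameters are chosen as follows: by assumption~(ii), I first pick $b$ sufficiently large at a point where the running maximum $M(b)$ is attained (so that $M(b)=\MM(b)\DD(b)\to +\infty$), which ensures (\ref{th:M(b)>>}). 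Next I set $\lambda$ of the order of $(\MM(b)/\DD(b))^{1/2}$, with the implicit constant large enough that $\lambda\geq\Lambda$ and $\lambda\geq\sup_{t\in[a,b]}\mmu(t)$. Finally, since the zeros of $t\mapsto\sin(m\lambda t)$ are $\pi/(m\lambda)$-spaced and this spacing is much less than $1$ for our $\lambda$, I shift $a$ to a point of $[A,A+1]$ and adjust $b$ slightly so that $\sin(m\lambda a)=\sin(m\lambda b)=0$. This cleanly aligns the initial data (\ref{eqn:data}) with the ansatz (\ref{defn:DGCS-w}).

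The main technical obstacle is the verification of the pointwise bounds (\ref{th:hyp-ab})--(\ref{th:stab-ab}) on the explicit coefficient (\ref{defn:DGCS-c}). A direct inspection shows $|c-c_\infty|\lesssim K\mmu^2/\lambda^2$, which gives (\ref{th:hyp-ab}) for $K$ small and, after integration over $[a,b]$, gives $\int_a^b|c-c_\infty|\,d\tau\lesssim K\MM(b)/\lambda^2\lesssim K\DD(b)$, so (\ref{th:stab-ab}) follows. Differentiating (\ref{defn:DGCS-c}) once, the leading term of $c'$ comes from the $\cos(2m\lambda t)$ factor and has size $\ep/2\lesssim K\mmu^2/\lambda\leq\mmu$ by our choice of $\lambda$. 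Differentiating a second time, the dangerous oscillating term produces a contribution of size $\ep\cdot m\lambda\lesssim Km\mmu^2\leq\mmu^2$ for $Km\leq 1$; all remaining contributions involve $\ep'$, $\ep''$ or $\ep^2$, and are controlled by $\mmu^2/\lambda$ (hence are even smaller) thanks to hypothesis~(\ref{hp:gamma-C3}) on $\mmu\in C^3$, which propagates the bound $|\ep^{(k)}|\lesssim\mmu^2/\lambda$. The cutoff $\eta$ can be chosen with bounded derivatives, contributing only negligible boundary corrections.

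Finally, since $\sin(m\lambda a)=\sin(m\lambda b)=0$, normalizing the DGCS ansatz (\ref{defn:DGCS-w}) to satisfy (\ref{eqn:data}) yields $\ul'(b)^2=c_\infty^{1/2}\exp(2\phi(b))$ with $\phi(b):=\frac{1}{8m^2}\int_a^b\ep(s)\sin^2(m\lambda s)\,ds$. Replacing $\sin^2(m\lambda s)$ by its average $1/2$ (the oscillating remainder is negligible by an integration by parts using $|\ep'|\lesssim\mmu^2/\lambda$), the choices of $\lambda$ and $\ep$ yield
\begin{equation}
\phi(b)\gtrsim\frac{K}{16m^2\lambda}\int_a^b\eta(s)\mmu(s)^2\,ds\gtrsim\frac{K}{m^2\lambda}\MM(b)\sim\sqrt{K}\,\bigl(\MM(b)\DD(b)\bigr)^{1/2}=\sqrt{K}\,M(b)^{1/2}.
\nonumber
\end{equation}
Taking $H_{14}$ to be twice this implicit constant delivers (\ref{th:ul'b}) and completes the proof.
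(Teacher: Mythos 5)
Your proposal is essentially the same as the paper's proof: same DGCS ansatz with $m=c_\infty^{1/2}$, same perturbation $\ep\sim\ep_0\theta\mmu^2/\lambda$, same choice $\lambda\sim(\MM(b)/\DD(b))^{1/2}$, same cutoff trick for compact support, same oscillating-integral estimate. The one place you diverge technically is how you arrange $\sin(m\lambda a)=\sin(m\lambda b)=0$: you propose to ``shift $a$ and adjust $b$ slightly,'' whereas the paper keeps $b$ fixed and instead rounds $\lambda$, setting $\lambda:=\frac{2\pi}{mb}\left\lfloor[\MM(b)/\DD(b)]^{1/2}\cdot\frac{mb}{2\pi}\right\rfloor$ (which still lies within a factor of $2$ of the desired scale), and then takes $a$ to be an integer multiple of $2\pi/(m\lambda)$ in $[A,A+1]$. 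The paper's variant is cleaner because $b$ must be chosen so that $M(b)=\MM(b)\DD(b)$ \emph{exactly} (i.e.\ a point where the running maximum is attained), and your final step uses this equality to pass from $[\MM(b)\DD(b)]^{1/2}$ to $M(b)^{1/2}$; once $\lambda$ is fixed in terms of $b$, moving $b$ by $O(1/\lambda)$ can break that identity, so you would still owe a small continuity argument showing $\MM(b')\DD(b')\gtrsim M(b')$ for the shifted $b'$ with a uniform implied constant. Rounding $\lambda$ rather than $b$ sidesteps this issue entirely. This is a fixable subtlety rather than a conceptual gap, but worth flagging.
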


\subsection{Heuristics for Proposition~\ref{prop:main}}

Let us choose $a=A$, and $b\gg a$ so that $G(b)\gg G(a)$. Let us set
\begin{equation}
    m:=c_\infty^{1/2},
    \qquad
    \lambda:=\left[\frac{G(b)}{S(b)}\right]^{1/2},
    \qquad
    \ep(t):=\frac{\ep_0}{\lambda}\mmu(t)^2,
    \label{defn:heuristics}
\end{equation}
where $\ep_0>0$ is a small parameter. Let us define $c(t)$ as in (\ref{defn:DGCS-c}), and $\ul(t):=c_\infty^{1/4}w(t)$, where $w$ is defined by (\ref{defn:DGCS-w}).
Let us show that this choices fulfill to some extent the requirements of Proposition~\ref{prop:main}.
\begin{itemize}

    \item Condition (\ref{th:M(b)>>}) is automatic if we choose $b$ large enough.

    \item The hyperbolicity condition (\ref{th:hyp-ab}) follows if we choose $\ep_0$ small enough.

    \item Concerning the bounds (\ref{th:der-ab}) on derivatives, let us limit ourselves to second order derivatives. Now $c''(t)$ has many terms, but our ansatz is that the ``most dangerous one'' is the term in which we keep $\ep(t)$ and we derive $\sin(2m\lambda t)$ twice. We end up with
    \begin{equation}
        \ep(t)\cdot m\lambda\sin(2m\lambda t)=
        \ep_0 m \sin(2m\lambda t)\mmu(t)^2,
    \nonumber
    \end{equation}
    and this can be controlled by $\mmu(t)^2$ by choosing $\ep_0$ properly.

    \item Concerning the stabilization condition (\ref{th:stab-ab}), let us pretend for a while that in (\ref{defn:DGCS-c}) there are only the first two terms. In this case we obtain that
    \begin{equation}
        \int_a^b|c(\tau)-c_\infty|\,d\tau\leq
        \int_a^b\frac{\ep(t)}{4m\lambda}\,d\tau=
        \frac{\ep_0}{4m\lambda^2}\int_a^b\mmu(\tau)^2\,d\tau\leq
        \frac{\ep_0}{4m}\frac{S(b)}{G(b)}G(b),
    \nonumber
    \end{equation}
    and therefore again (\ref{th:stab-ab}) is satisfied if we choose $\ep_0$ small enough.

    \item The (Kowaleskian) energy of $\ul$ is equal to~1 at time $t=a$, and in $t=b$ it is proportional to the exponential of
    \begin{multline}
        \int_a^b\ep(\tau)\sin^2(m\lambda\tau)\,d\tau\sim
        \frac{1}{2}\int_a^b\ep(\tau)\,d\tau=
        \frac{\ep_0}{2\lambda}\int_a^b\mmu(\tau)^2\,d\tau
        \\[0.5ex]
        =
        \frac{\ep_0}{2}\left[\frac{S(b)}{G(b)}\right]^{1/2}(G(b)-G(a))\sim
        \frac{\ep_0}{2}[S(b)G(b)]^{1/2}\sim
        \frac{\ep_0}{2}M(b)^{1/2},
        \label{est:exp-naive}
    \end{multline}
    where in the first step we approximated the oscillating integral by replacing the function $\sin^2(m\lambda t)$ by its average, and then we used also that $G(b)\gg G(a)$.
\end{itemize}

Certainly, there are several gaps in the previous arguments. Let's identify these gaps and outline how they will be addressed in the full proof.
\begin{itemize}
    \item If $\ep(t)$ is defined as above, then $c(t)-c_\infty$ does not have compact support within $(a,b)$. This can be fixed by using a cutoff function near the boundary.

    \item In the estimates of derivatives, and in the stabilization condition, one has to consider all the terms in the definition of $c(t)$. With some patience this can be done. Incidentally, this is the point where we leverage assumption (\ref{hp:gamma-C3}), which prevents the function $\mmu$ from growing too rapidly.

    \item The approximation of the oscillating integral needs a rigorous justification.

    \item When considering the solution $\ul(t)$, we need to know that $\sin(m\lambda a)=\sin(m\lambda b)=0$. This necessitates careful selection of $a$ and $b$ to satisfy these conditions.

    \item In the final step of (\ref{est:exp-naive}), it is beneficial to have $S(b)G(b)=M(b)$. Although this equality does not hold for all values of $b$, it does hold for sufficiently many, requiring careful selection of $b$.
\end{itemize}


\subsection{Technical preliminaries}

In this subsection, we establish three technical results aimed at defining three constants, $\Gamma_1$, $\Gamma_2$, $\Gamma_3$, which are crucial for selecting parameters in the proof of  Proposition~\ref{prop:main}. Here we work with a generic non-negative function $g(t)$, and its integral
\begin{equation}
    G(t):=\int_{t_0}^t g(\tau)\,d\tau
    \qquad
    \forall t\geq t_0.
    \label{defn:G-bis}
\end{equation}

In the sequel of the paper we apply these results to $g(t):=\mmu(t)^2$, and therefore definition (\ref{defn:G-bis}) is coherent with (\ref{defn:G}).

The first result says, in a nutshell, that if $g(t)$ grows at most exponentially, then also $G(t)$ grows at most exponentially, and in particular it is possible to estimate $G(t+1)$ in terms of $G(t)$.

\begin{lemma}\label{lemma:G<G}
    Let $t_0$ be a real number, let $g:[t_0,+\infty)\to[0,+\infty)$ be a monotone (either non-increasing or non-decreasing) function of class $C^1$, and let $G$ be defined by (\ref{defn:G-bis}). 

    Let us assume that there exists a constant $\Lambda_4$ such that
    \begin{equation}
        |g'(t)|\leq\Lambda_4 g(t)
        \qquad
        \forall t\geq t_0.
        \label{hp:g'-G}
    \end{equation}

    Then there exist two constants $\Gamma_1$ and $\Gamma_2$ such that
    \begin{equation}
        g(t)\leq\Gamma_1 G(t)
        \qquad
        \forall t\geq t_0+1,
        \label{th:lemma1-g<G}
    \end{equation}
    and
    \begin{equation}
        G(t+1)\leq\Gamma_2 G(t)
        \qquad
        \forall t\geq t_0+1.
        \label{th:lemma1-G<G}
    \end{equation}
\end{lemma}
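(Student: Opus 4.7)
The plan rests on the observation that the differential inequality (\ref{hp:g'-G}) is equivalent to $\bigl|\tfrac{d}{dt}\log g(t)\bigr|\leq \Lambda_4$ wherever $g>0$, which integrates to the two-sided envelope
\[
g(s)\exp(-\Lambda_4|s-\tau|)\leq g(\tau)\leq g(s)\exp(\Lambda_4|s-\tau|)
\qquad
\forall s,\tau\geq t_0.
\]
If $g(t_1)=0$ at some point, the monotonicity assumption forces $g\equiv 0$ on a half-line starting from $t_1$, and on that half-line both conclusions are trivial; off it the inequality above applies. I will therefore assume $g>0$ throughout.

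To prove (\ref{th:lemma1-g<G}), I would fix $t\geq t_0+1$ and average $g$ over the unit subinterval $[t-1,t]$, which by assumption lies inside $[t_0,t]$. When $g$ is non-decreasing, the envelope gives $g(\tau)\geq g(t)\exp(-\Lambda_4)$ for $\tau\in[t-1,t]$; when $g$ is non-increasing, monotonicity alone gives $g(\tau)\geq g(t)$ on the same interval. Either way, integrating yields $G(t)\geq \exp(-\Lambda_4)\,g(t)$, so $\Gamma_1:=\exp(\Lambda_4)$ works.

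For (\ref{th:lemma1-G<G}), I would use the corresponding upper estimate $g(\tau)\leq g(t)\exp(\Lambda_4)$ on $[t,t+1]$ (from the envelope, or directly from monotonicity in the non-increasing case), so that $G(t+1)-G(t)\leq \exp(\Lambda_4)\,g(t)$; combining with (\ref{th:lemma1-g<G}) gives $G(t+1)\leq\bigl(1+\exp(2\Lambda_4)\bigr)G(t)$, and $\Gamma_2:=1+\exp(2\Lambda_4)$ suffices. The whole argument is essentially mechanical once the exponential envelope has been extracted from (\ref{hp:g'-G}), and I do not anticipate any serious obstacle. The one point deserving care is the restriction $t\geq t_0+1$ in the statement: it is forced precisely so that the unit averaging interval $[t-1,t]$ fits inside the domain of $g$, which is what makes the one-step comparison between $g(t)$ and $G(t)$ possible.
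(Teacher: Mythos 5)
Your proof is correct, and it takes a genuinely different route from the paper. The paper defines the auxiliary function $D(t):=\Gamma_1 G(t)-g(t)$ with $\Gamma_1:=\max\{\Lambda_4,\,g(t_0+1)/G(t_0+1)\}$, shows $D'\geq 0$ directly from (\ref{hp:g'-G}), and anchors $D\geq 0$ at $t=t_0+1$; the constant $\Gamma_1$ therefore carries a dependence on the ratio $g(t_0+1)/G(t_0+1)$ (which the paper later acknowledges when tracking the provenance of its constants). You instead integrate the Gronwall envelope $g(t)e^{-\Lambda_4|\tau-t|}\leq g(\tau)\leq g(t)e^{\Lambda_4|\tau-t|}$ over a unit subinterval, which yields the cleaner constants $\Gamma_1=e^{\Lambda_4}$ and $\Gamma_2=1+e^{2\Lambda_4}$ depending only on $\Lambda_4$. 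Both arguments are essentially one-line Gronwall exploitations; yours buys uniform constants, the paper's buys a slightly sharper $\Gamma_1$ (indeed your own estimate at $t=t_0+1$ shows the paper's $\Gamma_1\leq e^{\Lambda_4}$). One small imprecision: if $g(t_1)=0$ for some $t_1$, the differential inequality $|g'|\leq\Lambda_4 g$ alone (without monotonicity) forces $g\equiv 0$ on all of $[t_0,+\infty)$ by Gronwall in both directions, not merely on a half-line; your phrasing could leave the reader worried about a $t$ with $t<t_1<t+1$ in estimate (\ref{th:lemma1-G<G}), but that scenario cannot occur, so the conclusion stands.
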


\begin{proof}

Let us assume for a while that $G(t_0+1)>0$. In this case we set 
\begin{equation}
    \Gamma_1:=\max\left\{\Lambda_4,\frac{g(t_0+1)}{G(t_0+1)}\right\},
    \qquad\qquad
    \Gamma_2:=\exp(\Gamma_1),
    \nonumber
\end{equation}
and we consider the function 
\begin{equation}
    D(t):=\Gamma_1 G(t)-g(t)
    \qquad
    \forall t\geq t_0.
    \nonumber
\end{equation}

From the definition of $\Gamma_1$, and assumption (\ref{hp:g'-G}), we deduce that $D(t)$ is non-decreasing in $[t_0,+\infty)$, and non-negative for $t=t_0+1$. It follows that $D(t)\geq 0$ for every $t\geq t_0+1$, which is equivalent to (\ref{th:lemma1-g<G}).

At this point, we can interpret (\ref{th:lemma1-g<G}) as a differential inequality for the function $G(t)$. Integrating this differential inequality we conclude that
\begin{equation}
    G(t_2)\leq G(t_1)\exp(\Gamma_1(t_2-t_1))
    \nonumber
\end{equation}
for every interval $[t_1,t_2]\subseteq[t_0+1,+\infty)$. Choosing $t_1:=t$ and $t_2:=t+1$ in this inequality, we obtain (\ref{th:lemma1-G<G}).

In the case where $G(t_0+1)=0$, we observe that this condition implies that necessarily $g(t)\equiv 0$ for every $t\in[t_0,t_0+1]$, and therefore the same argument works with $\Gamma_1:=\Lambda_4$, because in this case $D(t_0+1)=0$ for free.    
\end{proof}


The second result is a rather classical estimate for an oscillating integral.

\begin{lemma}\label{lemma:osc-int}
    Let $t_0$, $g$ and $G$ be as in Lemma~\ref{lemma:G<G}.

    Then for every interval $[\alpha,\beta]\subseteq[t_0,+\infty)$, and for every real number $\ell>0$, it turns out that
    \begin{equation}
        \int_\alpha^\beta g(\tau)\sin^2(\ell\tau)\,d\tau\geq
        \frac{1}{2}(G(\beta)-G(\alpha))-
        \frac{1}{\ell}\max\{g(\alpha),g(\beta)\}.
    \nonumber
    \end{equation}
\end{lemma}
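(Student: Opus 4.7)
The natural starting point is the double-angle identity
$$\sin^2(\ell\tau)=\frac{1-\cos(2\ell\tau)}{2},$$
which lets us decompose
$$\int_\alpha^\beta g(\tau)\sin^2(\ell\tau)\,d\tau=\frac{1}{2}\bigl(G(\beta)-G(\alpha)\bigr)-\frac{1}{2}\int_\alpha^\beta g(\tau)\cos(2\ell\tau)\,d\tau.$$
The first term on the right is exactly the leading order contribution appearing in the statement, so the whole problem reduces to showing
$$\left|\int_\alpha^\beta g(\tau)\cos(2\ell\tau)\,d\tau\right|\leq \frac{2}{\ell}\max\{g(\alpha),g(\beta)\}.$$

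I would prove this by a straightforward integration by parts against the primitive $\sin(2\ell\tau)/(2\ell)$. The boundary term is
$$\left[g(\tau)\frac{\sin(2\ell\tau)}{2\ell}\right]_\alpha^\beta,$$
which is bounded in absolute value by $(g(\alpha)+g(\beta))/(2\ell)$. The remaining integral is $-\int_\alpha^\beta g'(\tau)\sin(2\ell\tau)/(2\ell)\,d\tau$; here the monotonicity hypothesis enters crucially, because it forces $g'$ to have constant sign on $[\alpha,\beta]$, so
$$\left|\int_\alpha^\beta g'(\tau)\frac{\sin(2\ell\tau)}{2\ell}\,d\tau\right|\leq \frac{1}{2\ell}\int_\alpha^\beta|g'(\tau)|\,d\tau=\frac{|g(\beta)-g(\alpha)|}{2\ell}.$$
Summing the boundary and interior contributions and using $g(\alpha)+g(\beta)+|g(\beta)-g(\alpha)|=2\max\{g(\alpha),g(\beta)\}$ yields the bound above, and indeed gives a slightly sharper constant than the one stated (the factor $1$ could be replaced by $1/2$).

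There is no real obstacle: the lemma is a clean application of integration by parts, with monotonicity serving only to convert $\int|g'|$ into a telescoping endpoint difference. Alternatively, one could invoke Bonnet's second mean value theorem directly to get the same bound in one step, but the integration-by-parts route is more self-contained and requires only the regularity $g\in C^1$ already assumed in Lemma~\ref{lemma:G<G}.
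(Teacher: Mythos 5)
Your proof is correct and follows essentially the same route as the paper: the double-angle identity to isolate the mean term $(G(\beta)-G(\alpha))/2$, integration by parts against $\sin(2\ell\tau)/(2\ell)$ for the oscillatory remainder, and monotonicity of $g$ to reduce $\int|g'|$ to $|g(\beta)-g(\alpha)|$. You even arrive at a slightly sharper error constant $1/(2\ell)$ by merging the boundary and interior contributions through the identity $g(\alpha)+g(\beta)+|g(\beta)-g(\alpha)|=2\max\{g(\alpha),g(\beta)\}$, whereas the paper bounds these two pieces separately and settles for the looser $1/\ell$.
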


\begin{proof}

To begin with, we observe that
\begin{equation}
    \int_\alpha^\beta g(\tau)\sin^2(\ell\tau)\,d\tau=
    \frac{1}{2}\int_\alpha^\beta g(\tau)\,d\tau-
    \frac{1}{2}\int_\alpha^\beta g(\tau)\cos(2\ell\tau)\,d\tau.
    \nonumber
\end{equation}

The first term is equal to $(G(\beta)-G(\alpha))/2$. In the second term we integrate by parts and we obtain that
\begin{equation}
    \int_\alpha^\beta g(\tau)\cos(2\ell\tau)\,d\tau=
    \frac{1}{2\ell}[g(\beta)\sin(2\ell \beta)-g(\alpha)\sin(2\ell \alpha)]-
    \frac{1}{2\ell}\int_\alpha^\beta g'(\tau)\sin(2\ell\tau)\,d\tau,
    \nonumber
\end{equation}
and hence
\begin{equation}
    \left|\int_\alpha^\beta g(\tau)\cos(2\ell\tau)\,d\tau\right|\leq
    \frac{1}{\ell}\max\{g(\alpha),g(\beta)\}+
    \frac{1}{2\ell}\int_\alpha^\beta |g'(\tau)|\,d\tau.
    \nonumber
\end{equation}

Finally, by the monotonicity of $g$ we conclude that
\begin{equation}
    \int_\alpha^\beta |g'(\tau)|\,d\tau=
    |g(\beta)-g(\alpha)|\leq
    \max\{g(\alpha),g(\beta)\},
    \nonumber
\end{equation}
which completes the proof.    
\end{proof}


In the third technical lemma we consider a function of the form (\ref{defn:DGCS-c}), where $\ep(t)$ is the product of two functions $\theta(t)$ and $g(t)$, up to multiplicative constants. We show that suitable estimates on the functions $\theta(t)$ and $g(t)$, and their derivatives, yield corresponding estimates on $c(t)$ and its derivatives. 

\begin{lemma}\label{lemma:computation}
    Let $[a,b]\subseteq\re$ be an interval, let $\lambda\geq 1$ be a real number, and let $\theta:[a,b]\to[0,+\infty)$ and $g:[a,b]\to[0,+\infty)$ be two functions of class $C^3$.

    Let us assume that there exist two constants $\Lambda_5$ and $\Lambda_6$ such that, for every $t\in[a,b]$, the following estimates hold:
    \begin{gather}
        \max\left\{|\theta(t)|,|\theta'(t)|,|\theta''(t)|,|\theta'''(t)|\right\}\leq\Lambda_5,
        \label{hp:lemma3-theta}
        \\[1ex]
        \max\left\{|g'(t)|,|g''(t)|,|g'''(t)|\right\}\leq \Lambda_6 g(t),
        \label{hp:lemma3-g}
        \\[0.5ex]
        g(t)\leq\lambda^2.
        \label{hp:lemma3-lambda}
    \end{gather}

    Given two real numbers $m>0$ and $\ep_0\in(0,1]$, we consider the function $c(t)$ defined by (\ref{defn:DGCS-c}) with
    \begin{equation}
        \ep(t):=\frac{\ep_0}{\lambda}\theta(t)g(t)
        \qquad
        \forall t\in[a,b].
        \label{defn:ep-theta-g}
    \end{equation}

    Then there exists a constant $\Gamma_3$, which depends only on $m$, $\Lambda_5$, $\Lambda_6$, such that
    \begin{equation}
        \max\left\{\lambda^2|c(t)-m^2|,\lambda|c'(t)|,|c''(t)|\right\}\leq
        \Gamma_3\ep_0g(t)
        \qquad
        \forall t\in[a,b].
    \label{th:c-c'-c''}
    \end{equation}
\end{lemma}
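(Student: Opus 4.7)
The plan is to verify the three inequalities in (\ref{th:c-c'-c''}) directly from definition (\ref{defn:DGCS-c}), in four steps of increasing complexity.

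First I would control $\ep$ and its derivatives up to order three. From (\ref{defn:ep-theta-g}) and Leibniz's rule, $\ep^{(k)}$ is a sum of products $\theta^{(j)}g^{(k-j)}$ with combinatorial coefficients, times $\ep_0/\lambda$. Using (\ref{hp:lemma3-theta}), (\ref{hp:lemma3-g}) and the non-negativity of $g$, one obtains
\[
|\ep^{(k)}(t)| \leq \frac{\ep_0\,K_k}{\lambda}\,g(t) \qquad\text{for } k \in \{0,1,2,3\},
\]
with constants $K_k$ depending only on $\Lambda_5$ and $\Lambda_6$.

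Next, for $\lambda^2|c(t) - m^2|$, I would bound the three summands in (\ref{defn:DGCS-c}) using $|\sin|\leq 1$ together with the estimates from the first step. The three summands contribute sizes of order $\ep_0 g/\lambda^2$, $\ep_0 g/\lambda^3$, and $\ep_0^2 g^2/\lambda^4$ respectively. In the third, hypothesis (\ref{hp:lemma3-lambda}) together with $\ep_0\leq 1$ absorbs the extra factors, giving the required bound.

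For $\lambda|c'(t)|$, I would differentiate each summand of (\ref{defn:DGCS-c}), producing six terms. Every time $\partial_t$ hits a sine with argument $m\lambda t$ it brings down a factor $m\lambda$ that cancels one $1/\lambda$ in the prefactor; every time it hits $\ep^{(j)}$ it raises the order by one, which is covered by the first step. Bounding term by term, and using $g\leq\lambda^2$ and $\ep_0\leq 1$ for the quadratic pieces, each contribution to $\lambda|c'(t)|$ is at most a constant times $\ep_0 g(t)$. For $|c''(t)|$, one further differentiation produces a similar but longer list of terms. The dominant contribution is $m\lambda\,\ep(t)\sin(2m\lambda t)$, arising when $\partial_t$ falls twice on the sine in the first summand of (\ref{defn:DGCS-c}); its size is $m\ep_0\Lambda_5\, g(t)$. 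All other terms, including the cross-products $\ep^{(j)}\ep^{(k)}$ coming from differentiating the quadratic summand, are of the same or smaller order after another application of (\ref{hp:lemma3-lambda}), $\lambda\geq 1$, and $\ep_0\leq 1$. The constant $\Gamma_3$ is then the sum of the explicit coefficients collected throughout.

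The main obstacle is purely organizational: one must enumerate a dozen or so terms in $c''$ without missing any, and for each verify the cancellation between the $1/\lambda$ prefactors in (\ref{defn:DGCS-c}) and the $\lambda$ brought down by differentiating sines. The underlying mechanism is nevertheless clear: every explicit $1/\lambda$ in the definition of $c$ matches at most one factor $m\lambda$ produced by differentiating a sine, and (\ref{hp:lemma3-lambda}) closes the remaining gap for products of $\ep$ with itself.
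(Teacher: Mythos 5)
Your proposal is correct and follows essentially the same route as the paper's own proof: bound $\ep$ and its first three derivatives by a multiple of $\ep_0 g(t)/\lambda$ via Leibniz's rule and hypotheses (\ref{hp:lemma3-theta})--(\ref{hp:lemma3-g}), then expand $c-m^2$, $c'$, $c''$ from (\ref{defn:DGCS-c}) term by term, and absorb the surviving powers using $\lambda\geq1$, $g\leq\lambda^2$, and $\ep_0\leq1$. The only difference is cosmetic ordering (the paper first writes the bounds for $c,c',c''$ in terms of $|\ep^{(k)}|$ with a generic constant $\Gamma(m)$ and afterwards inserts the $\ep^{(k)}$ bounds, while you do the $\ep^{(k)}$ bounds first), so nothing of substance is missing.
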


\begin{proof}
    With some patience, from (\ref{defn:DGCS-c}) we obtain that there exists a constant $\Gamma(m)$, that depends only on $m$, such that (for the sake of shortness, here we omit the dependence on $t$ in the right-hand sides)
    \begin{eqnarray*}
        |c(t)-m^2| & \leq &
        \Gamma(m)\left\{\frac{|\ep|}{\lambda}+\frac{|\ep'|+\ep^2}{\lambda^2}\right\},
        \\
        |c'(t)| & \leq &
        \Gamma(m)\left\{|\ep|+\frac{|\ep'|+\ep^2}{\lambda}+\frac{|\ep \ep'|+|\ep''|}{\lambda^2}\right\},
        \\
        |c''(t)| & \leq &
        \Gamma(m)\left\{\lambda|\ep|+
        |\ep'|+\ep^2+
        \frac{|\ep \ep'|+|\ep''|}{\lambda}+
        \frac{|\ep'|^2+|\ep \ep''|+|\ep'''|}{\lambda^2}\right\}.
    \end{eqnarray*}

    On the other hand, from (\ref{defn:ep-theta-g}), (\ref{hp:lemma3-theta}) and (\ref{hp:lemma3-g}) we deduce that
    \begin{equation}
        \begin{array}{c@{\qquad\quad}c}
        |\ep(t)|\leq\dfrac{\ep_0}{\lambda}\Lambda_5 g(t),     &  |\ep'(t)|\leq\dfrac{\ep_0}{\lambda}\Lambda_5(1+\Lambda_6) g(t),
        \\[3ex]
        |\ep''(t)|\leq\dfrac{\ep_0}{\lambda}\Lambda_5(1+3\Lambda_6) g(t),
        & 
        |\ep'''(t)|\leq\dfrac{\ep_0}{\lambda}\Lambda_5(1+7\Lambda_6) g(t).
        \end{array}
    \nonumber
    \end{equation}

    Taking into account that $\lambda\geq 1$, and assumption (\ref{hp:lemma3-lambda}), from all these estimates we deduce (\ref{th:c-c'-c''}).
\end{proof}


\subsection{Proof of Proposition~\ref{prop:main}}

Since the statement is invariant by time-translations, we assume, without loss of generality, that $t_0\geq 0$. We set also, throughout this proof, $m:=c_\infty^{1/2}$ and $g(t):=\mmu(t)^2$.

\paragraph{\textmd{\textit{Choice of $b$}}}

Let $\Lambda_3$ be the constant that appears in our assumption (\ref{hp:gamma-C3}), and let $\Gamma_1$ and $\Gamma_2$ be the two constants of Lemma~\ref{lemma:G<G} corresponding to $\Lambda_4:=2\Lambda_3$.

Let us choose a real number $b$ with the following properties:
\begin{gather}
    b\geq A+4,
    \qquad\qquad
    M(b)\geq L\cdot M(A),
    \qquad\qquad
    4\Gamma_1\cdot S(b)\leq 1,
    \label{hp:b-1}
    \\[0.5ex]
    G(b)\geq 2\Gamma_2 G(A+2),
    \qquad\qquad
    \frac{G(b)}{S(b)}\geq\max\left\{\frac{16\pi^2}{m^2},4,4\lambda_0^2,4\Lambda^2\right\},
    \label{hp:b-2}
    \\[1ex]
    M(b)=G(b)S(b).
    \label{hp:b-3}
\end{gather}

The choice is possible because all the inequalities in (\ref{hp:b-1}) and (\ref{hp:b-2}) are true provided that $b$ is large enough, and there exists an unbounded set $\mathcal{B}\subseteq[t_0,+\infty)$ such that (\ref{hp:b-3}) is satisfies for every $b\in\mathcal{B}$. Indeed, for every $r>0$ the number
\begin{equation}
    \min\{t\geq t_0:G(t)S(t)\geq r\}
    \nonumber
\end{equation}
belongs to $\mathcal{B}$.

\paragraph{\textmd{\textit{Choice of $\lambda$}}}

Let us set
\begin{equation}
    \lambda:=\frac{2\pi}{mb}\left\lfloor\left[\frac{G(b)}{S(b)}\right]^{1/2}\cdot\frac{mb}{2\pi}\right\rfloor,
    \nonumber
\end{equation}
where $\lfloor\alpha\rfloor$ denotes the floor function, namely the greatest integer less than or equal to $\alpha$. We observe that this definition is asymptotically equivalent to the definition of $\lambda$ in (\ref{defn:heuristics}), but with the extra property that $m\lambda b$ is an integer multiple of $2\pi$.

We claim that
\begin{equation}
    \frac{1}{2}\left[\frac{G(b)}{S(b)}\right]^{1/2}\leq
    \lambda\leq
    \left[\frac{G(b)}{S(b)}\right]^{1/2},
    \label{est:lambda=S/G}
\end{equation}
and as a consequence
\begin{equation}
    \lambda\geq
    \max\left\{\frac{2\pi}{m},1,\lambda_0,\Lambda\right\},
    \label{est:lambda>max}
\end{equation}
and
\begin{equation}
    \lambda^2\geq\mmu(t)^2
    \qquad
    \forall t\in[A,b].
    \label{est:lambda>gamma}
\end{equation}

Indeed, the estimate from above in (\ref{est:lambda=S/G}) is immediate from the definition of integer part. As for the estimate from below, we observe that for every pair of real numbers $X\geq 4\pi/m$ and $b\geq 1$ it turns out that
\begin{equation}
    \frac{2\pi}{mb}\left\lfloor X\cdot\frac{mb}{2\pi}\right\rfloor\geq
    \frac{2\pi}{mb}\left( X\cdot\frac{mb}{2\pi}-1\right)=
    X-\frac{2\pi}{mb}\geq\frac{X}{2}.
    \nonumber
\end{equation}

Applying this inequality with $X:=[G(b)/S(b)]^{1/2}$, we obtain the estimate from below in (\ref{est:lambda=S/G}). At this point, (\ref{est:lambda>max}) follows from the second condition in (\ref{hp:b-2}), while from (\ref{th:lemma1-g<G}) (in this point we need that $A\geq t_0+1$) and the last condition in (\ref{hp:b-1}) we deduce that
\begin{equation}
    \lambda^2\geq
    \frac{1}{4}\frac{G(b)}{S(b)}\geq
    \frac{1}{4}\frac{G(t)}{S(b)}\geq
    \frac{1}{4\Gamma_1}\frac{\mmu(t)^2}{S(b)}\geq
    \mmu(t)^2
    \qquad
    \forall t\in[A,b],
    \nonumber
\end{equation}
which is exactly (\ref{est:lambda>gamma}).

\paragraph{\textmd{\textit{Choice of $a$}}}

Since $m\lambda\geq 2\pi$, there exists an integer $k_0$ such that
\begin{equation}
    a:=\frac{2\pi}{m\lambda}\cdot k_0\in[A,A+1].
    \label{defn:a}
\end{equation}

\paragraph{\textmd{\textit{Choice of $c(t)$}}}

From (\ref{defn:a}) and the first condition in (\ref{hp:b-1}) we know that the length of $[a,b]$ is at least~3. Let us choose a cutoff function $\theta$ of class $C^2$ with compact support in $(a,b)$ such that $\theta(t)=1$ for every $t\in[a+1,b-1]$, and
\begin{equation}
    \max\left\{|\theta(t)|,|\theta'(t)|,|\theta''(t)|,|\theta'''(t)|\right\}\leq 100
    \qquad
    \forall t\in(a,b)
    \nonumber
\end{equation}
(of course 100 can be replaced by a smaller constant, but the point here is that this constant can be chosen to be independent of $[a,b]$). Let $\Gamma_3$ be the constant of Lemma~\ref{lemma:computation} corresponding to $\Lambda_5:=100$ and $\Lambda_6:=(6\Lambda_3^2 + 2 \Lambda_3) $. Let us choose
\begin{equation}
    \ep_0:=\min\left\{\frac{\Lambda_2-\Lambda_1}{2\Gamma_3},\frac{1}{8\Gamma_3},4m^3\log 2\right\},
    \label{hp:ep0}
\end{equation}
and let us set
\begin{equation}
    \ep(t):=\frac{\ep_0}{\lambda}\theta(t)\mmu(t)^2
    \qquad
    \forall t\in[a,b].
    \label{defn:ept}
\end{equation}

We observe that this definition coincides with the definition of $\ep(t)$ given in (\ref{defn:heuristics}), with the addition of $\theta(t)$ in order to guarantee the compact support. Finally, we define $c(t)$ as in (\ref{defn:DGCS-c}).

\paragraph{\textmd{\textit{Compact support and estimates on $c(t)$}}}

As already observed, the function $c(t)-c_\infty$ has compact support in $(a,b)$ due to the presence of $\theta(t)$.

Concerning the uniform hyperbolicity, from (\ref{th:c-c'-c''}) we know that
\begin{equation}
    |c(t)-c_\infty|\leq\frac{\Gamma_3\ep_0\mmu(t)^2}{\lambda^2}
    \qquad
    \forall t\in[a,b],
    \label{est:c-m}
\end{equation}
and hence from (\ref{est:lambda>gamma}) and the first condition in (\ref{hp:ep0}) we deduce that
\begin{equation}
    |c(t)-c_\infty|\leq
    \Gamma_3\ep_0\leq
    \frac{\Lambda_2-\Lambda_1}{2}
    \qquad
    \forall t\in[a,b].
    \nonumber
\end{equation}

Recalling that $c_\infty=(\Lambda_1+\Lambda_2)/2$, this inequality implies (\ref{th:hyp-ab}).

Concerning estimates on derivatives, from (\ref{th:c-c'-c''}), (\ref{est:lambda>gamma}), and the second condition in (\ref{hp:ep0}), we obtain that
\begin{equation}
    |c'(t)|\leq
    \frac{\Gamma_3\ep_0}{\lambda}\cdot\mmu(t)^2=
    \Gamma_3\ep_0\cdot\frac{\mmu(t)}{\lambda}\cdot\mmu(t)\leq
    \Gamma_3\ep_0\cdot\mmu(t)\leq
    \frac{1}{8}\mmu(t),
    \nonumber
\end{equation}
and
\begin{equation}
    |c''(t)|\leq
    \Gamma_3\ep_0\cdot\mmu(t)^2\leq
    \frac{1}{8}\mmu(t)^2,
    \nonumber
\end{equation}
which prove (\ref{th:der-ab}).

Concerning the stabilization property, we integrate (\ref{est:c-m}) over $[a,b]$, and we exploit the estimate from below in (\ref{est:lambda=S/G}). We obtain that
\begin{equation}
    \int_a^b|c(\tau)-c_\infty|\,d\tau\leq
    \frac{\Gamma_3\ep_0}{\lambda^2}(G(b)-G(a))\leq
    \Gamma_3\ep_0\cdot 4\frac{S(b)}{G(b)}\cdot G(b)\leq
    4\Gamma_3\ep_0 S(b),
    \nonumber
\end{equation}
and therefore (\ref{th:stab-ab}) follows from the second condition in (\ref{hp:ep0}).

\paragraph{\textmd{\textit{Estimates on $\ul(t)$}}}

Let us consider the solution $\ul(t)$ to equation (\ref{eqn:ode}) with initial data (\ref{eqn:data}). Since $m\lambda a$ is an integer multiple of $2\pi$, one can check that
\begin{equation}
    \ul(t)=c_\infty^{1/4}w(t)
    \qquad
    \forall t\in[a,b],
    \nonumber
\end{equation}
with $w$ given by (\ref{defn:DGCS-w}). Since also $m\lambda b$ is a multiple of $2\pi$, from the same explicit formula we deduce that
\begin{equation}
    \ul'(b)^2=c_\infty^{1/2}\exp\left(
    \frac{1}{4c_\infty}\int_a^b\ep(\tau)\sin^2(m\lambda\tau)\,d\tau\right).
    \label{eqn:ul'b}
\end{equation}

Let us estimate the argument of the exponential. Since $\theta(t)\equiv 1$ in the interval $[a+1,b-1]$, from (\ref{defn:ept}) we obtain that
\begin{equation}
    I:=\int_a^b\ep(\tau)\sin^2(m\lambda\tau)\,d\tau\geq
    \int_{a+1}^{b-1}\ep(\tau)\sin^2(m\lambda\tau)\,d\tau=
    \frac{\ep_0}{\lambda}\int_{a+1}^{b-1}\mmu(\tau)^2\sin^2(m\lambda\tau)\,d\tau.
    \nonumber
\end{equation}

The last oscillating integral can be estimated by applying Lemma~\ref{lemma:osc-int} in the interval $[a+1,b-1]$ with $\ell:=m\lambda$. We obtain that
\begin{equation}
    I\geq\frac{\ep_0}{2\lambda}(G(b-1)-G(a+1))-
    \frac{\ep_0}{m\lambda^2}\max\left\{\mmu(b-1)^2,\mmu(a+1)^2\right\}.
    \nonumber
\end{equation}

If we estimate the first $\lambda$ from above as in (\ref{est:lambda=S/G}), and the second $\lambda$ from below as in (\ref{est:lambda>gamma}), we deduce that
\begin{equation}
    I\geq\frac{\ep_0}{2}\left[\frac{S(b)}{G(b)}\right]^{1/2}(G(b-1)-G(a+1))-\frac{\ep_0}{m}.
    \nonumber
\end{equation}

Finally, from (\ref{th:lemma1-G<G}) and the first condition in (\ref{hp:b-2}) we obtain that
\begin{equation}
    G(b-1)\geq\frac{1}{\Gamma_2}G(b)
    \qquad\text{and}\qquad
    G(a+1)\leq G(A+2)\leq
    \frac{1}{2\Gamma_2}G(b),
    \nonumber
\end{equation}
and hence
\begin{equation}
    I\geq
    \frac{\ep_0}{2}\left[\frac{S(b)}{G(b)}\right]^{1/2}\cdot
    \frac{1}{2\Gamma_2}G(b)-\frac{\ep_0}{m}=
    \frac{\ep_0}{4\Gamma_2}[S(b)G(b)]^{1/2}-\frac{\ep_0}{m}.
    \nonumber
\end{equation}

Recalling (\ref{hp:b-3}) and the last condition in (\ref{hp:ep0}), we conclude that
\begin{equation}
    I\geq
    \frac{\ep_0}{4\Gamma_2}M(b)^{1/2}-4m^2\log 2.
    \nonumber
\end{equation}

Plugging this inequality into (\ref{eqn:ul'b}) we obtain (\ref{th:ul'b}) with $H_{14}:=\ep_0/(16c_\infty\Gamma_2)$. We observe that $\ep_0$ depends only on $\Lambda_1$, $\Lambda_2$, $\Lambda_3$, and $\Gamma_2$ depends also on the behavior of $\gamma(t)$ in the interval $[t_0,t_0+1]$.
\qed


\subsection{From Proposition~\ref{prop:main} to Theorem~\ref{thm:optimality}}\label{sec:iteration}

\paragraph{\textmd{\textit{Iterative construction}}}

For every integer $k\geq 1$, we define a real number $A_k$, an interval $[a_k,b_k]\subseteq[t_0,+\infty)$, a real number $\lambda_k\geq \lambda_0$, and a function $c_k:[a_k,b_k]\to(0,+\infty)$ in the following way.

To begin with, set $A_1:=t_0+1$, and we apply Proposition~\ref{prop:main} with $A:=A_1$, $L:=4$ and $\Lambda:=\lambda_0$. In this way we obtain the first interval $[a_1,b_1]$, the first number $\lambda_1$, and the first function $c_1:[a_1,b_1]\to(0,+\infty)$. Then we proceed by induction. Let us assume that for some $k\geq 1$ we have already defined $[a_k,b_k]$, $\lambda_k$ and $c_k$. Then we choose a real number $A_{k+1}$ such that
\begin{equation}
    A_{k+1}\geq b_k
    \qquad\text{and}\qquad
    \DD(A_{k+1})\leq\frac{1}{2}\DD(b_k),
    \label{defn:Ak+1}
\end{equation}
and we apply Proposition~\ref{prop:main} with $A:=A_{k+1}$, $L:=4(k+1)^2$ and $\Lambda:=\lambda_k+1$. We obtain the new interval $[a_{k+1},b_{k+1}]$, the new real number $\lambda_{k+1}$, and the new function $c_{k+1}:[a_{k+1},b_{k+1}]\to(0,+\infty)$.

We observe that
\begin{itemize}
    \item the intervals $[a_k,b_k]$ are essentially disjoint (no interior points in common),

    \item $M(b_k)\geq 4k^2 M(A_k)$ for every $k\geq 1$.

    \item the sequences $\{A_k\}$, $\{b_k\}$ and  $\{\lambda_k\}$ are increasing, and tend to $+\infty$.
\end{itemize}

\paragraph{\textmd{\textit{Construction of the propagation speed}}}

Let us define $c_\infty$ as in (\ref{defn:c-infty}), and let us consider the function $c:[t_0,+\infty)\to(0,+\infty)$ defined by
\begin{equation}
    c(t):=\begin{cases}
        c_k(t) & \text{if $t\in[a_k,b_k]$ for some $k\geq 1$},
        \\
        c_\infty & \text{otherwise}.
    \end{cases}
    \nonumber
\end{equation}

The function $c(t)$ is well-defined and of class $C^2$ because the intervals $(a_k,b_k)$ are disjoint, and each function $t\mapsto c_k(t)-c_\infty$ has compact support in $(a_k,b_k)$.

Let us check that $c\in\PS(t_0,\Lambda_1,\Lambda_2,S,\mmu)$. The uniform hyperbolicity (\ref{hp:hyp}), and the control on derivatives (\ref{hp:der}), are satisfied because they are true in each interval $[a_k,b_k]$ due to (\ref{th:hyp-ab}) and (\ref{th:der-ab}), and outside these intervals the function is constant. 

In order to check the stabilization at infinity (\ref{hp:stab}), we observe that every $t\geq t_0$ belongs either to $[a_k,b_k]$ for some $k\geq 1$, or to $[b_{k-1},a_k]$ for some $k\geq 2$, or to $[t_0,a_1]$.

Let us start by considering the case where $t\in[a_k,b_k]$ for some $k\geq 1$. Due to the monotonicity of $\DD$, from (\ref{defn:Ak+1}) we deduce that $\DD(b_{k+1})\leq\DD(A_{k+1})\leq\DD(b_k)/2$, and therefore by induction
\begin{equation}
    \DD(b_{k+i})\leq\frac{1}{2^i}\DD(b_k)
    \qquad
    \forall k\geq 1,
    \quad
    \forall i\geq 0.
    \nonumber
\end{equation}

Thus from (\ref{th:stab-ab}) we conclude that
\begin{multline*}
    \qquad
    \int_t^{+\infty}|c(\tau)-c_\infty|\,d\tau\leq
    \int_{a_k}^{+\infty}|c(\tau)-c_\infty|\,d\tau=
    \sum_{i=0}^{\infty}\int_{a_{k+i}}^{b_{k+i}}|c(\tau)-c_\infty|\,d\tau
    \\[1ex]
    \leq\sum_{i=0}^{\infty}\frac{1}{2}\DD(b_{k+i})\leq
    \frac{1}{2}\sum_{i=0}^{\infty}\frac{1}{2^{i}}\DD(b_{k})=
    \DD(b_k)\leq
    \DD(t).
    \qquad
\end{multline*}

If $t\in[b_{k-1},a_{k}]$ for some $k\geq 2$, then we observe that $c(t)=c_\infty$ in this interval, and hence
\begin{equation}
    \int_t^{+\infty}|c(\tau)-c_\infty|\,d\tau=
    \int_{a_k}^{+\infty}|c(\tau)-c_\infty|\,d\tau\leq
    S(b_k)\leq
    S(t),
    \nonumber
\end{equation}
and analogously if $t\in[t_0,a_1]$.

\paragraph{\textmd{\textit{Construction of the solution}}}

For every integer $k\geq 1$, we consider the solution $v_k:[t_0,+\infty)\to\re$ to the ordinary differential equation
\begin{equation}
    v_k''(t)+\lambda_k^2 c(t)v_k(t)=0,
    \nonumber
\end{equation}
with ``initial'' data
\begin{equation}
    v_k(a_k)=0,
    \qquad
    v_k'(a_k)=c_\infty^{1/4}.
    \label{eqn:vk-data}
\end{equation}

Let
\begin{equation}
    E_k(t):=\frac{v_k'(t)^2}{c_\infty^{1/2}}+\lambda_k^2 c_\infty^{1/2} v_k(t)^2
    \nonumber
\end{equation}
denote the (Kowaleskian) energy of this solution. From (\ref{eqn:vk-data}) we know that $E_k(a_k)=1$, while from statement~(4) of Proposition~\ref{prop:main} we know that
\begin{equation}
    E_k(b_k)\geq\frac{v_k'(b_k)^2}{c_\infty^{1/2}}\geq
    \frac{1}{2}\exp\left(H_{14}M(b_k)^{1/2}\right).
    \label{est:Ekbk}
\end{equation}

We claim that there exists two constants $H_{15}$ and $H_{16}$, both independent of $k$, such that
\begin{equation}
    0<E_k(t_0)\leq H_{15}\exp\left(H_{16}M(A_k)^{1/2}\right).
    \label{est:Ekt0}
\end{equation}

Indeed, the estimate from below is trivial because otherwise $v_k$ would be identically zero. As for the estimate from above, in the interval $[t_0,A_k]$ we can apply the estimates from below corresponding to Theorem~\ref{thm:est-above} (see Remark~\ref{rmk:below}), which read as
\begin{equation}
    1=E_k(A_k)\geq
    E_k(t_0)\cdot (H_{17})^{-1}\exp(-H_{18} M(A_k)^{1/2}),
    \nonumber
\end{equation}
where the two constants $H_{17}$ and $H_{18}$ are either equal to $H_1$ and $H_2$, or equal to $H_3$ and $H_4$, depending on the verse of the monotonicity of $\mmu$. In any case, the last estimate is equivalent to (\ref{est:Ekt0}).

Let us consider now, more generally, the solution $v_{\lambda,k}:[t_0,+\infty)\to\re$ to equation
\begin{equation}
    v_{\lambda,k}''(t)+\lambda^2 c(t)v_{\lambda,k}(t)=0,
    \nonumber
\end{equation}
with ``initial'' data
\begin{equation}
    v_{\lambda,k}(a_k)=0,
    \qquad
    v_{\lambda,k}'(a_k)=c_\infty^{1/4},
    \nonumber
\end{equation}
and its energy
\begin{equation}
    E_{\lambda,k}(t):=\frac{v_{\lambda,k}'(t)^2}{c_\infty^{1/2}}+\lambda^2 c_\infty^{1/2} v_{\lambda,k}(t)^2.
    \nonumber
\end{equation}

Since solutions depend continuously on $\lambda$, from (\ref{est:Ekbk}) and (\ref{est:Ekt0}) we deduce that there exists $\widehat{\lambda}_k\in(\lambda_k,\lambda_{k+1})$ such that
\begin{equation}
    E_{\lambda,k}(b_k)\geq
    \frac{1}{2}E_k(b_k)
    \qquad\text{and}\qquad
    E_{\lambda,k}(t_0)\leq 2E_k(t_0)
    \qquad
    \forall\lambda\in[\lambda_k,\widehat{\lambda}_k].
    \label{est:Eklt0}
\end{equation}

Now for every integer $k\geq 1$ we consider the set
\begin{equation}
    \mathcal{M}_k:=\left\{\xi\in\mathcal{M}:\lambda(\xi)\in[\lambda_k,\widehat{\lambda}_k]\right\}.
    \nonumber
\end{equation}

Since $\lambda_k\geq\lambda_0$, by our assumption on the spectrum of $A$ we know that $\mu(\mathcal{M}_k)>0$, and therefore for every $t\geq t_0$ and every $\xi\in\mathcal{M}$ we can set
\begin{equation}
    \widehat{u}(t,\xi):=
    \begin{cases}
        \dfrac{1}{k E_k(t_0)^{1/2}\mu(\mathcal{M}_k)^{1/2}}\cdot
        v_{\lambda(\xi),k}(t)\quad & \text{if $\xi\in\mathcal{M}_k$ for some $k\geq 1$},
        \\
        0 & \text{otherwise}.
    \end{cases}
    \nonumber
\end{equation}

We claim that the function $u(t):=\mathscr{F}^{-1}(\widehat{u}(t,\xi))$ is a solution to (\ref{eqn:main}) that satisfies (\ref{th:counterexample}) with
\begin{equation}
    H_5:=\frac{H_{14}}{2}.
    \nonumber
\end{equation}

To begin with, we have to show that $(u(t_0),u'(t_0))\in D(A^{1/2})\times H$, from which it follows that
\begin{equation}
    u\in C^0\left([t_0,+\infty),D(A^{1/2})\right)\cap C^1([t_0,+\infty),H)
    \nonumber
\end{equation}
due to the regularity of the coefficient $c(t)$. To this end, we observe that
\begin{eqnarray*}
    \E_u(t_0) & = & 
    \dfrac{\|u'(t_0)\|_H^2}{c_\infty^{1/2}}+
    c_\infty^{1/2}\,\|A^{1/2}u(t_0)\|_H^2
    \\[1ex]
    & = &
    \int_{\mathcal{M}}\left(
    \dfrac{|\widehat{u}'(t_0,\xi)|^2}{c_\infty^{1/2}}+
    \lambda(\xi)^2 c_\infty^{1/2}\,|\widehat{u}(t_0,\xi)|^2\right)\,d\xi
    \\[1ex]
    & = &
    \sum_{k=1}^\infty
    \frac{1}{k^2 E_k(t_0)\mu(\mathcal{M}_k)}
    \int_{\mathcal{M}_k}\left(
    \dfrac{|v_{\lambda(\xi),k}'(t_0)|^2}{c_\infty^{1/2}}+
    \lambda(\xi)^2 c_\infty^{1/2}\,|v_{\lambda(\xi),k}(t_0)|^2\right)d\xi.
\end{eqnarray*}

Now from the second relation in (\ref{est:Eklt0}) we know that
\begin{equation}
    \dfrac{|v_{\lambda(\xi),k}'(t_0)|^2}{c_\infty^{1/2}}+
    c_\infty^{1/2}\lambda(\xi)^2\,|v_{\lambda(\xi),k}(t_0)|^2=
    E_{\lambda(\xi),k}(t_0)\leq
    2E_k(t_0)
    \qquad
    \forall\xi\in\mathcal{M}_k,
    \nonumber
\end{equation}
and hence
\begin{equation}
    \E_u(t_0)=
    \dfrac{\|u'(t_0)\|_H^2}{c_\infty^{1/2}}+
    c_\infty^{1/2}\,\|A^{1/2}u(t_0)\|_H^2\leq
    2\sum_{k=1}^\infty\frac{1}{k^2}=
   \frac{\pi^2}{3}.
    \nonumber
\end{equation}

It remains to estimate $\E_u(b_k)$ from below. To this end, by considering only the contribution of $\mathcal{M}_k$, we obtain that
\begin{eqnarray*}
    \E_u(b_k) & = & 
    \dfrac{\|u'(b_k)\|_H^2}{c_\infty^{1/2}}+
    c_\infty^{1/2}\,\|A^{1/2}u(b_k)\|_H^2
    \\
    & = &
    \int_{\mathcal{M}}\left(
    \frac{|\widehat{u}'(b_k,\xi)|^2}{c_\infty^{1/2}}+
    \lambda(\xi)^2c_\infty^{1/2}\,|\widehat{u}(b_k,\xi)|^2\right)\,d\xi
    \\
    & \geq &
    \frac{1}{k^2 E_k(t_0)\mu(\mathcal{M}_k)}
    \int_{\mathcal{M}_k}\left(
    \frac{|v_{\lambda(\xi),k}'(b_k)|^2}{c_\infty^{1/2}}+
    \lambda(\xi)^2c_\infty^{1/2}\,|v_{\lambda(\xi),k}(b_k)|^2\right)d\xi.
\end{eqnarray*}

On the other hand, from the first relation in (\ref{est:Eklt0}) we know that
\begin{equation}
    \frac{|v_{\lambda(\xi),k}'(b_k)|^2}{c_\infty^{1/2}}+
    c_\infty^{1/2}\lambda(\xi)^2\,|v_{\lambda(\xi),k}(b_k)|^2=
    E_{\lambda(\xi),k}(b_k)\geq
    \frac{1}{2}E_k(b_k),
    \nonumber
\end{equation}
from which we obtain that
\begin{equation}
    \E_u(b_k)\geq
    \frac{1}{2k^2 E_k(t_0)}E_k(b_k).
    \nonumber
\end{equation}

Taking into account (\ref{est:Ekbk}) and (\ref{est:Ekt0}), we deduce that
\begin{eqnarray*}
    \frac{\E_u(b_k)}{\E_u(t_0)} & \geq &
    \frac{3}{\pi^2}\cdot
    \frac{1}{2k^2 E_k(t_0)}\cdot
    \frac{1}{2}
    \exp\left(H_{14}M(b_k)^{1/2}\right)
    \\
    & \geq &
    \frac{3}{4\pi^2}\cdot\frac{1}{H_{15}}\cdot\frac{1}{k^2}
    \exp\left(H_{14}M(b_k)^{1/2}-H_{16}M(A_{k})^{1/2}\right).
\end{eqnarray*}

At this point (\ref{th:counterexample}) is proved if we show that
\begin{equation}
    H_{14}M(b_k)^{1/2}-H_{16}M(A_{k})^{1/2}-2\log k-\log\left(\frac{3}{4\pi^2}\cdot\frac{1}{H_{15}}\right)\geq
    \frac{H_{14}}{2}M(b_k)^{1/2}
    \nonumber
\end{equation}
when $k$ if large enough (because we can always remove a finite number of terms from the sequence $b_k$). The latter is true because $M(b_k)\geq 4k^2 M(A_k)$, and $M(A_k)\to +\infty$, so that
\begin{equation}
    M(b_k)^{1/2}\geq
    2kM(A_{k})^{1/2}\geq
    kM(A_{k})^{1/2}+k
    \nonumber
\end{equation}
as soon as $M(A_k)\geq 1$.
\qed


\subsubsection*{\centering Acknowledgments}

Both authors are members of the Italian {\selectlanguage{italian}%
``Gruppo Nazionale per l'Analisi Matematica, la Probabilità e le loro Applicazioni'' (GNAMPA) of the ``Istituto Nazionale di Alta Matematica'' (INdAM)}. 

The first author was partially supported by PRIN 2020XB3EFL, ``Hamiltonian and Dispersive PDEs''. 

The authors acknowledge the MIUR Excellence Department Project awarded to the Department of Mathematics, University of Pisa, CUP I57G22000700001.

\selectlanguage{english}



\label{NumeroPagine}

\end{document}